\newtheorem{theorem}{Theorem}
\theoremstyle{plain}
\newtheorem{corollary}[theorem]{Corollary}
\newtheorem{definition}[theorem]{Definition}
\newtheorem{proposition}[theorem]{Proposition}
\newcommand\bR{\mathbb{R}}
\newcommand\bE{\mathbb{E}}
\def\dd{\mathrm{d}}
\def\mm{\mathrm{m}}
\newcommand\bld[1]{\boldsymbol{#1}}
\newcommand\opr[1]{\mathrm{#1}}
\numberwithin{theorem}{section}
\numberwithin{equation}{section}
\begin{document}

\today

% Small Ball Probabilities for the Hilbert-Sobolev Norms
% of the solution of a diagonalizable stochastic parabolic
% equation

\title[]{Small Ball Probabilities for the Infinite-Dimensional Ornstein-Uhlenbeck
Process in Sobolev Spaces}
\author{S. V. Lototsky}
\curraddr[S. V. Lototsky]{Department of Mathematics, USC\\
Los Angeles, CA 90089 USA\\
tel. (+1) 213 740 2389; fax: (+1) 213 740 2424}
\email{lototsky@usc.edu}
\urladdr{http://www-bcf.usc.edu/$\sim$lototsky}

\subjclass[2000]{60H15, 60G15, 60J60}
\keywords{Logarithmic asymptotic, Laplace transform, small ball constant, small ball rate,
exponential Tauberian theorems}

\begin{abstract}
While small ball, or lower tail, asymptotic for Gaussian measures
generated by solutions of stochastic ordinary differential equations is
relatively well understood, a lot less is known in the case of
stochastic partial differential
equations. The paper presents  exact logarithmic asymptotics
of the small ball probabilities in a scale of Sobolev spaces
when the Gaussian measure is generated by the
solution of a diagonalizable stochastic parabolic equation.
   Compared to the finite-dimensional case, new effects appear in
a certain  range of the Sobolev exponents.
\end{abstract}

\maketitle

\section{Introduction}

A standard Gaussian random variable $\zeta$ is very unlikely to be large:
$$
\mathbb{P}(|\zeta|^2>A) \leq e^{-A/2}, \ A>0,
$$
(cf. \cite[Lemma A.3]{Khosh-SPDE}), but it is relatively likely to be small:
by direct computation,
$$
\mathbb{P}(|\zeta|^2\leq \varepsilon)\geq \frac{\varepsilon^{1/2}}{3},\ \varepsilon<1.
$$
In fact, for every finite collection of iid standard Gaussian random variables
$\zeta_1, \ldots, \zeta_n$, \\
analysis of the
density of the $\chi^2_{n}$ distribution shows that
\begin{equation*}
\label{SB-FD}
\lim_{\varepsilon\to 0} \varepsilon^{-n/2}\mathbb{P}\left(\sum_{k=1}^n \zeta_k^2\leq \varepsilon\right)=\frac{2^{(2-n)/2}}{n\Gamma(n/2)},
\end{equation*}
where $\Gamma$ is the Gamma function.
Similarly, for {\em finitely} many Gaussian random variables,
the asymptotic of
\begin{equation*}
\label{FinSum}
\mathbb{P}\left(\sum_{k=1}^n a_k\zeta_k^2\leq \varepsilon\right),\
a_k>0,
\end{equation*}
is always algebraic in $\varepsilon$, as $\varepsilon\to 0$.
On the other hand,  for a standard $N$-dimensional Brownian motion $\bld{w}=\bld{w}(t)$, $0\leq t\leq T$,
$$
\mathbb{P}\left(\int_0^T |\bld{w}(t)|^2dt \leq \varepsilon \right)
\approx e^{-N^2T^2/(8\varepsilon)}, \ \ \ \varepsilon \to 0,
$$
that is,
\begin{equation}
\label{SB-sBM}
\lim_{\varepsilon\to 0} \varepsilon
 \ln \mathbb{P}\left(\int_0^T  |\bld{w}(t)|^2 dt \leq \varepsilon \right) = -\frac{N^2T^2}{8};
\end{equation}
cf. \cite[Theorem 6.3 and Corollary 3.1]{lishao01} or Corollary \ref{cor:NBM}
below.

Transition from a finite to an infinite number of Gaussian random variables
 typically leads to  a qualitative change of behavior of small ball (or lower tail)
 probabilities: if $a_k>0,\ \sum_{k} a_k<\infty,$
then,  as $\varepsilon \to 0$, the decay of
\begin{equation*}
\label{InfSum}
\mathbb{P}\left(\sum_{k=1}^{\infty}a_k \zeta_k^2\leq \varepsilon\right),\ \
\end{equation*}
is usually faster that polynomial in $\varepsilon.$

The logarithmic asymptotic \eqref{SB-sBM} is rather robust:
if  $\bld{x}=\bld{x}(t)$ is the solution of the linear equation
\begin{equation}
\label{OU-fd}
d\bld{x}(t)=A\bld{x}(t)+Qd\bld{w}(t), \ 0<t\leq T, \ \bld{x}(0)=0,
\end{equation}
with a positive-definite matrix $Q$, then
\begin{equation}
\label{SB-OU-fd}
\lim_{\varepsilon\to 0} \varepsilon \ln \mathbb{P}
\left(\int_0^T |\bld{x}(t)|^2dt\leq \varepsilon \right) =
-\frac{\big(T\;\mathrm{trace}(Q)\big)^2}{8}
\end{equation}
regardless of the matrix $A$; cf. \cite[Theorem 4.5]{SVL-MM}.
In one-dimensional case, \eqref{SB-OU-fd} continues to hold
even with some time-dependent drifts \cite{fatalov08}.

A possible  infinite-dimensional generalization of \eqref{OU-fd}
 is the stochastic wave equation
\begin{equation}
\label{SWE}
u_{tt}=u_{xx}+g(u)+\dot{W}(t,x),\ t>0,\ x\in \bR,
\end{equation}
where $W=W(t,x) $ is a two-parameter Brownian sheet and $\dot{W}(t,x)
=\partial^2 W/(\partial t \partial x)$
is the corresponding space-time Gaussian white noise. Indeed, a
change of variables reduces \eqref{SWE} to
\begin{equation}
\label{OU-2pm}
\frac{\partial^2 v}{\partial t\partial x}= g(v)+
\frac{\partial^2 \tilde{W}}{\partial t\partial x},
\end{equation}
with a different Brownian sheet $\tilde{W}$; cf. \cite[Theorem 3.1]{Walsh}.
 Equation
\eqref{OU-2pm} can thus be considered  an extension of \eqref{OU-fd} to two
independent variables in the spirit of
\cite[Section 7.4.2]{Khosh}; according to  \cite{SmallBall-StochWave}, the small
ball probabilities for $u$ and $W$  have similar asymtotics.

So far, the paper \cite{SmallBall-StochWave} appears to be the only
work  addressing the question of small ball probabilities
for stochastic partial differential equations.
The objective of the current paper is to investigate the asymptotic
 behavior of
 $\ln \mathbb{P}(\|u\|^2_{L_2((0,T);H^{\gamma})}\leq \varepsilon),$ $\varepsilon \to 0$,
 where $u$ is the solution of the stochastic parabolic  equation
\begin{equation}
\label{SHE}
u_t(t,\bld{x})+\opr{A} u(t,\bld{x}) =
\dot{W}(t,\bld{x}), 0<t\leq T,\ \bld{x} \in G,
\end{equation}
 $\opr{A}$ is a positive self-adjoint  elliptic operator on a bounded domain
 $G\subset \bR^{\dd}$,
 $\dot{W}$ is space-time Gaussian white noise,
 and $H^{\gamma},\ \gamma\in \bR,$ is the
scale of Sobolev space generated by $\opr{A}$. An expansion of the solution
of \eqref{SHE} in eigenfunctions of $\opr{A}$ leads to an infinite system of
ordinary differential equations, making \eqref{SHE}  an
infinite-dimensional version of \eqref{OU-fd}. The results can be
summarized as follows: For both $X=u$ and $X=W$,
as $\varepsilon\to 0$,
$$
 \ln \mathbb{P}\left(
 \int_0^T  \|X(t)\|_{-\gamma}^2 dt \leq \varepsilon \right)
  \sim
  \begin{cases}
  \displaystyle - \frac{T^2}{8}
\mathfrak{C}(\gamma)
\varepsilon^{-1},& {\rm \ if \ } \gamma>\dd,\\
& \\
\displaystyle  -\mathfrak{C}(T,X)
 \varepsilon^{-1}\,
  |\ln \varepsilon|^2,& {\rm \ if \ } \gamma=\dd,\\
  & \\
 \displaystyle  -\mathfrak{C}(T,\gamma,X)\, \varepsilon^{-\varpi(\gamma,X)},
 & {\rm \ if \ } \gamma_0(X)< \gamma<\dd,
 \end{cases}
$$
where $\mathfrak{C}$, $\gamma_0$, and
$\varpi$ are suitable numbers. For example, $\gamma_0(W)=\dd/2$ and
$\varpi(\gamma,W)=\dd/(2\gamma-\dd)$. In particular, if $\gamma > \dd$, then
the result  is very similar to the finite-dimensional case
\eqref{SB-OU-fd}. The details are below in  Theorem \ref{prop:cBM-SB} ($X=W$)
and Theorem \ref{th:main} $(X=u)$.

Throughout the paper, for $f(x)>0$, $g(x)>0$, the notation
 $$
 f(x)\sim g(x),\ x\to x_0,
 $$
  means
 $$
 \lim_{x\to x_0} \frac{f(x)}{g(x)}=1,
 $$
 $f(x)=O(g(x)), \ x\to x_0,$ means $\limsup_{x\to x_0} f(x)/ g(x)<\infty,$
 and $f(x)=o(g(x)), \ x\to x_0,$ means $\lim_{x\to x_0}
 f(x)/g(x)=0$.
 The variable $x$ can be discrete or continuous and the limiting
 value $x_0$ finite or infinite.  We also
  fix
  $\big(\Omega, \mathcal{F} ,
  \{\mathcal{F}_t\}_{0\leq t\leq T}, \mathbb{P}\big)$, a stochastic basis
satisfying the usual assumptions.

\section{ Background on small ball probabilities}

Let $\zeta_n,\ n\geq 1,$ be independent identically distributed standard Gaussian
random variables and let $a_n,\ n\geq 1,$ be  positive
real  numbers such that $\sum_n a_n <\infty$.
By direct computation,
\begin{equation}
\label{LPT0}
\begin{split}
\bE \exp\left( -p \sum_{n=1}^{\infty} a_n \zeta_n^2 \right)=
&
\prod_{n=1}^{\infty} \bE e^{-pa_n\zeta_n}
= \prod_{n=1}^{\infty} \left(1+2p\,a_n\right)^{-1/2}\\
&=\exp\left(-\frac{1}{2}\sum_{n=1}^{\infty} \ln (1+2pa_n)\right), \ p>0,
\end{split}
\end{equation}
 and then {\tt Tauberian theorems} make it possible to connect the asymptotic of the right-hand side of \eqref{LPT0}  as $p\to +\infty$ with the asymptotic of
$$
\mathbb{P}\left(\sum_{n=1}^{\infty} a_n \zeta_n^2\leq \varepsilon\right)
$$
as $\varepsilon\to 0$.
The most general result in this direction was obtained in  \cite{Syt1}:
\begin{equation}
\label{eq:Sytaya}
\begin{split}
\mathbb{P}\left(\sum_{n=1}^{\infty} a_n \zeta_n^2
\leq \varepsilon\right)
&\sim
\left(4\pi \sum_{n\geq 1} \left(\frac{a_n \mathfrak{r}(\varepsilon)}
{1+2a_n\mathfrak{r}(\varepsilon)}\right)^2\right)^{-1/2}
\\
& \times
\exp\left(\varepsilon \mathfrak{r}(\varepsilon)-\frac{1}{2}
\sum_{n\geq 1}
\ln \big(1+2a_n\mathfrak{r}(\varepsilon)\big)\right),\ \varepsilon\to 0.
\end{split}
\end{equation}
 The function  $\mathfrak{r}=\mathfrak{r}(\varepsilon)$ is defined
 implicitly by the relation
 $$
\varepsilon
=\sum_{n\geq 1} \frac{a_n}{1+2a_n\mathfrak{r}(\varepsilon)},
$$
and this implicit dependence on $\varepsilon$ is the main
drawback of  \eqref{eq:Sytaya} in concrete applications.

Less precise but more explicit bounds are possible using
 {\tt exponential Tauberian theorems}, such as
 Theorems \ref{thm:tauberian} and \ref{thm:tauberian1} below;
  they are
 modifications of \cite[Theorem 3.5]{lishao01}
(which,  in turn, is a modification of \cite[Theorem 4.12.9]{RegVar}).

\begin{theorem}
\label{thm:tauberian}
Let $\xi$ be a non-negative random variable.
Then
\begin{align*}
\ln \big(\mathbb{E} e^{-p \xi}\big)
 \sim  -\alpha p^{\tau},\ p\to +\infty,\
 {\rm \ \ for \ some \ } \ \ \alpha>0,\
0<\tau<1,
\end{align*}
holds if and only if
\begin{align}
\label{eq:asydev}
\ln \mathbb{P}(\xi\leq\varepsilon) \sim
-\big((1-\tau) \alpha\big)^{1/(1-\tau)}
\left(\frac{\tau}{1-\tau}\right)^{{\tau}/(1-\tau)}\,
\varepsilon^{-\tau/(1-\tau)},\ \varepsilon \to 0.
\end{align}
\end{theorem}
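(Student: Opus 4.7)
The statement is an exponential Tauberian theorem of de Bruijn type, so my plan is to follow the pattern of \cite[Theorem 4.12.9]{RegVar} and its probabilistic adaptation \cite[Theorem 3.5]{lishao01}. The key observation is that the constant in \eqref{eq:asydev} is precisely the negative of the Legendre dual of $\alpha p^\tau$: the minimum over $p>0$ of $p\varepsilon-\alpha p^\tau$ is attained at $p_\varepsilon=(\alpha\tau/\varepsilon)^{1/(1-\tau)}$ and equals
\[
-(1-\tau)\,\alpha^{1/(1-\tau)}\,\tau^{\tau/(1-\tau)}\,\varepsilon^{-\tau/(1-\tau)}
=-\big((1-\tau)\alpha\big)^{1/(1-\tau)}\big(\tau/(1-\tau)\big)^{\tau/(1-\tau)}\varepsilon^{-\tau/(1-\tau)}.
\]
Consequently, both implications should reduce to pairing an exponential Chebyshev-type bound with a saddle-point analysis, and the constants will fall out automatically from Legendre duality.

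For the ``only if'' direction (Laplace implies small ball) I would begin with Markov's inequality: for every $p>0$,
\[
\ln\mathbb{P}(\xi\le\varepsilon)\le p\varepsilon+\ln\bE e^{-p\xi}=p\varepsilon-\alpha p^{\tau}\bigl(1+o(1)\bigr),
\]
and choosing $p=p_\varepsilon$ would at once give the \emph{upper} half of \eqref{eq:asydev}. For the matching lower bound I would use an exponential tilt: let $\mathbb{Q}_p$ have Radon--Nikodym derivative $e^{-p\xi}/\bE e^{-p\xi}$ with respect to $\mathbb{P}$, so that the cumulant identity $\mathbb{E}_{\mathbb{Q}_p}[\xi]=-\frac{d}{dp}\ln\bE e^{-p\xi}$ together with the Laplace hypothesis shows $\mathbb{E}_{\mathbb{Q}_{p_\varepsilon}}[\xi]\approx\varepsilon$. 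A variance bound under $\mathbb{Q}_{p_\varepsilon}$ combined with Chebyshev then concentrates $\xi$ on a window around $\varepsilon$, and undoing the tilt turns that concentration into the matching lower bound on $\mathbb{P}(\xi\le\varepsilon)$, losing only polynomial-in-$\varepsilon$ factors that are absorbed by the logarithmic asymptotic.

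For the easier ``if'' direction I would use the representation $\bE e^{-p\xi}=p\int_0^\infty e^{-px}\mathbb{P}(\xi\le x)\,dx$, substitute the small-ball asymptotic $\ln\mathbb{P}(\xi\le x)\sim -c\,x^{-\tau/(1-\tau)}$ with $c$ the constant on the right of \eqref{eq:asydev}, and apply Laplace's method to the exponent $-px-cx^{-\tau/(1-\tau)}$; the saddle point sits at $x_p=\bigl(c\tau/((1-\tau)p)\bigr)^{1-\tau}$ and a direct computation shows that the value of the exponent there is exactly $-\alpha p^{\tau}$. The main obstacle I anticipate is the Tauberian (``only if'') lower bound: the Markov step is painless but one-sided, and converting a uniform statement about $\bE e^{-p\xi}$ into a pointwise one about $\mathbb{P}(\xi\le\varepsilon)$ requires either the tilting argument above or the monotone-density and regular-variation machinery of \cite{RegVar}. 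Once that step is in place, the Legendre-duality structure guarantees that all constants line up.
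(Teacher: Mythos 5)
The paper does not actually prove Theorem~\ref{thm:tauberian}: it states the result as ``a modification of \cite[Theorem 3.5]{lishao01} (which, in turn, is a modification of \cite[Theorem 4.12.9]{RegVar})'' and leaves the argument to those references, which go through Karamata's regular-variation / de~Bruijn machinery. Your proof takes a genuinely different route — exponential change of measure plus Laplace's method rather than regular variation — and the Legendre-duality bookkeeping is correct on both sides (I checked both the minimizer $p_\varepsilon$ and the saddle $x_p$; the constants do land exactly on $-\alpha p^\tau$ and on the expression in \eqref{eq:asydev}).

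The one genuine gap is in the tilting step. You write that the cumulant identity $\mathbb{E}_{\mathbb{Q}_p}[\xi]=-\frac{d}{dp}\ln\bE e^{-p\xi}$ ``together with the Laplace hypothesis'' gives $\mathbb{E}_{\mathbb{Q}_{p_\varepsilon}}[\xi]\approx\varepsilon$, and then that ``a variance bound under $\mathbb{Q}_{p_\varepsilon}$'' concentrates $\xi$. But the hypothesis $\ln\bE e^{-p\xi}\sim-\alpha p^\tau$ is a statement about the function, not its derivatives; you cannot differentiate an asymptotic for free. For the first derivative this is repairable: $L(p):=-\ln\bE e^{-p\xi}$ is concave and increasing, so the difference-quotient squeeze
$$\frac{L(p)-L(p(1-\delta))}{p\delta}\ \geq\ L'(p)\ \geq\ \frac{L(p(1+\delta))-L(p)}{p\delta}$$
combined with $L(p)\sim\alpha p^\tau$ and $\delta\to0$ yields $L'(p)\sim\alpha\tau p^{\tau-1}$. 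The second derivative, i.e.\ $\mathrm{Var}_{\mathbb{Q}_p}[\xi]=-L''(p)$, does \emph{not} follow from the asymptotic of $L$, even using concavity: $L''$ can oscillate wildly while $L$ and $L'$ stay on track. So the Chebyshev-based concentration as written has a hole. You need a substitute — for example a Paley–Zygmund-type lower bound using a second tilt at a nearby parameter $q=p(1\pm\delta)$ (which only invokes first-derivative control), or simply falling back on the monotone-density / regular-variation argument you already identify as the alternative. I'd also note that the cruder routes (e.g.\ Markov plus subtracting the tail $e^{-p\varepsilon}$ from $\bE e^{-p\xi}$) provably do \emph{not} recover the sharp constant $(1-\tau)\alpha^{1/(1-\tau)}\tau^{\tau/(1-\tau)}$, so the tilt is not a convenience here — it, or the regular-variation machinery, is essential, and therefore the concentration step must actually be closed.
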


While \eqref{eq:asydev} is only  {\em logarithmic}
asymptotic of the probability
and is not as strong as \eqref{eq:Sytaya}, it is usually more
explicit than \eqref{LPT0} and is good enough in many
applications.

When \eqref{eq:asydev} holds, we
say that the random variable $\xi$ has the {\tt small ball rate}
 $$
\varpi=\frac{\tau}{1-\tau}
$$
and the {\tt small ball constant}
 $$
\mathfrak{C}=\left((1-\tau)\alpha\right)^{1/(1-\tau)}
\left(\frac{\tau}{1-\tau}\right)^{{\tau}/(1-\tau)}.
$$

Occasionally, a more refined version of Theorem \ref{thm:tauberian}
is necessary.

\begin{theorem}
\label{thm:tauberian1}
Let $\xi$ be a non-negative random variable.
Then
\begin{equation*}
\ln \big(\mathbb{E} e^{-p \xi}\big)
 \sim  -\alpha p^{\tau}(\ln p)^{\beta},\ p \to +\infty,
 {\rm \ \ for \ some \ } \ \ \alpha>0,\, \beta>0,\ 0<\tau<1,
\end{equation*}
holds if and only if
\begin{align*}
\ln \mathbb{P}(\xi\leq\varepsilon) \sim
-\big((1-\tau) \alpha\big)^{1/(1-\tau)}
\left(\frac{\tau}{1-\tau}\right)^{{\tau}/(1-\tau)}
\varepsilon^{-\tau/(1-\tau)} |\ln \varepsilon |^{\beta/(1-\tau)}.
\end{align*}
\end{theorem}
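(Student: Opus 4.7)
The plan is to adapt the proof of Theorem \ref{thm:tauberian} by treating $(\ln p)^\beta$ as a slowly varying perturbation of the basic power asymptotic $\alpha p^\tau$; this is precisely the situation covered by de Bruijn's exponential Tauberian theorem (\cite[Theorem 4.12.9]{RegVar}) with slowly varying function $L(p)=(\ln p)^\beta$. I would establish the two implications separately via the standard saddle-point / Legendre-transform correspondence, carrying the log factor along explicitly.

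For the ``if'' direction, assume $\ln \mathbb{E} e^{-p\xi}\sim -\alpha p^\tau (\ln p)^\beta$. Markov's bound $\mathbb{P}(\xi\le\varepsilon)\le e^{p\varepsilon}\mathbb{E}e^{-p\xi}$ gives
\[
\ln \mathbb{P}(\xi\le\varepsilon)\le p\varepsilon-\alpha(1+o(1))\,p^\tau(\ln p)^\beta
\]
for every $p>0$. Optimising in $p$ yields the implicit saddle-point relation $\varepsilon\sim\alpha\tau\, p^{\tau-1}(\ln p)^\beta$; one iteration produces $\ln p(\varepsilon)\sim |\ln\varepsilon|/(1-\tau)$ and
\[
p(\varepsilon)\sim\left(\frac{\alpha\tau}{\varepsilon}\right)^{1/(1-\tau)}\bigl(\ln p(\varepsilon)\bigr)^{\beta/(1-\tau)}.
\]
Substituting back recovers the upper bound on $\ln\mathbb{P}(\xi\le\varepsilon)$ with the stated constant. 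The matching lower bound---the more delicate half---is the exponential-shift argument in the proof of de Bruijn's theorem: the integral $\int_0^\infty e^{-p\varepsilon}\,d\mathbb{P}(\xi\le\varepsilon)$ is split around the saddle, and $\mathbb{P}(\xi\le\varepsilon)$ is bounded below by the mass in a window of appropriate width around $\varepsilon(p)$. The ``only if'' direction is symmetric: writing $\mathbb{E}e^{-p\xi}=\int_0^\infty pe^{-p\varepsilon}\mathbb{P}(\xi\le\varepsilon)\,d\varepsilon$ and applying Laplace's method to the integrand, whose exponent is $-p\varepsilon-C\varepsilon^{-\tau/(1-\tau)}|\ln\varepsilon|^{\beta/(1-\tau)}(1+o(1))$, reproduces $-\alpha p^\tau(\ln p)^\beta(1+o(1))$ after evaluation at the inverse of $p(\varepsilon)$.

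The main obstacle is the careful bookkeeping of the logarithmic correction. While the leading power relation $p^\tau\leftrightarrow\varepsilon^{-\tau/(1-\tau)}$ is identical to that in Theorem \ref{thm:tauberian}, the slowly varying factor $(\ln p)^\beta$ couples $\ln p$ to $|\ln\varepsilon|$ through $\ln p\sim|\ln\varepsilon|/(1-\tau)$, and this iteration must be kept consistent to pin down the small-ball constant. A more mechanical route that bypasses the hand computation is to invoke \cite[Theorem 4.12.9]{RegVar} directly with $L(p)=(\ln p)^\beta$, reading off the exponent $\beta/(1-\tau)$ on $|\ln\varepsilon|$ from the de Bruijn conjugate of this particular $L$.
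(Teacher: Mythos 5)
The paper states Theorem~\ref{thm:tauberian1} without proof, attributing it to modifications of \cite[Theorem 3.5]{lishao01} and \cite[Theorem 4.12.9]{RegVar}; your high-level strategy (Chernoff/Markov upper bound plus the exponential-shift lower bound, i.e.\ de Bruijn's exponential Tauberian theorem with $L(p)=(\ln p)^{\beta}$ slowly varying) is exactly the route the paper invokes, so the approach itself is the right one.

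However, there is a gap at the very step you declare closed. You correctly obtain the saddle relation $\varepsilon\sim\alpha\tau p^{\tau-1}(\ln p)^{\beta}$ and the implied asymptotics $\ln p\sim|\ln\varepsilon|/(1-\tau)$, but you then assert that ``substituting back recovers the stated constant'' without showing the algebra; carrying it out carefully does \emph{not} return the displayed constant. At the saddle the exponent equals
\begin{equation*}
p\varepsilon-\alpha p^{\tau}(\ln p)^{\beta}\sim -\alpha(1-\tau)\,p^{\tau}(\ln p)^{\beta},
\end{equation*}
and from $p^{1-\tau}\sim(\alpha\tau/\varepsilon)(\ln p)^{\beta}$ one gets
\begin{equation*}
p^{\tau}(\ln p)^{\beta}\sim(\alpha\tau/\varepsilon)^{\tau/(1-\tau)}(\ln p)^{\beta/(1-\tau)}.
\end{equation*}
Replacing $\ln p$ by $|\ln\varepsilon|/(1-\tau)$ then produces the extra factor
$(1-\tau)^{-\beta/(1-\tau)}$, so the saddle computation actually yields
\begin{equation*}
\ln\mathbb{P}(\xi\le\varepsilon)\sim-\big((1-\tau)\alpha\big)^{1/(1-\tau)}
\left(\frac{\tau}{1-\tau}\right)^{\tau/(1-\tau)}
(1-\tau)^{-\beta/(1-\tau)}\,\varepsilon^{-\tau/(1-\tau)}|\ln\varepsilon|^{\beta/(1-\tau)},
\end{equation*}
which differs from the theorem as stated by the factor $(1-\tau)^{-\beta/(1-\tau)}$. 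The same factor appears when you run the ``only if'' direction via Laplace's method: starting from the stated small-ball asymptotic and transforming back, one lands on $-\alpha(1-\tau)^{\beta}p^{\tau}(\ln p)^{\beta}$ rather than $-\alpha p^{\tau}(\ln p)^{\beta}$, again a discrepancy of $(1-\tau)^{\beta}$ consistent with the missing factor. You need either to carry this bookkeeping through explicitly and reconcile it with the stated constant (and, if the factor is real, note that the $\gamma=\dd$ constants in Theorems~\ref{prop:cBM-SB} and~\ref{th:main} inherit the same adjustment), or to explain precisely why the naive saddle-point identification of $\ln p$ with $|\ln\varepsilon|/(1-\tau)$ does not enter the constant; as written, the proof does not substantiate the formula it claims to recover.
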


\begin{proposition}
Let $x=x(t)$ be the solution of the equation
$$
dx(t)=-ax(t)dt+\sigma dw(t),\ 0<t<T,
$$
with $a\in \bR$ and $\sigma>0$,
and assume that the initial condition $x(0)$  is independent of
the Brownian motion $w$ and is a Gaussian random variable
 with  mean $\mu_0$ and variance
$\sigma_0^2$. Then
 \begin{equation}
 \label{OU-Psi-IC}
\bE \exp\left( -p\int_0^T x^2(t)dt\right)
=
\left(\frac{e^{aT}}{\cosh(\varrho T)+(a/\varrho)\sinh(\varrho T)}\right)^{1/2}
\times \frac{\exp\left(-\frac{\psi \mu_0^2}{1+2\sigma_0^2 \psi}\right)}{\sqrt{1+2\sigma_0^2 \psi}},
\end{equation}
 where
\begin{align*}
 \varrho&=(a^2+2\sigma^2 p)^{1/2},\\
  \psi &=\frac{\varrho-a}{2\sigma^2}
\left( 1-\frac{e^{-\varrho T}}{\cosh(\varrho T)
+(a/\varrho)\sinh(\varrho T)}\right).
\end{align*}
\end{proposition}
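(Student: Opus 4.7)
\medskip
\textbf{Proof proposal.} The plan is to reduce the integral $\int_0^T x^2(t)\,dt$ to a Feynman--Kac functional of a Markov diffusion, derive a Riccati equation by a Gaussian ansatz, solve the Riccati explicitly, and finally average over the Gaussian initial condition.

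First, I would condition on $x(0)$ and define, for a fixed value $x$ of $x(t)$,
$$
\Psi(t,x)=\bE\Bigl[\exp\Bigl(-p\int_t^T x^2(s)\,ds\Bigr)\,\Big|\,x(t)=x\Bigr].
$$
By the Feynman--Kac formula, $\Psi$ solves
$$
\partial_t\Psi-ax\,\partial_x\Psi+\tfrac{\sigma^2}{2}\partial_{xx}\Psi=px^2\Psi,\qquad \Psi(T,x)=1.
$$
Since the generator is quadratic in $x$ and the terminal datum is $1$, I would look for a Gaussian ansatz $\Psi(t,x)=\exp\bigl(-\tfrac12 A(t)x^2+B(t)\bigr)$ with $A(T)=B(T)=0$. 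Matching coefficients of $x^2$ and $1$ yields the Riccati system
$$
A'(t)=\sigma^2A(t)^2+2aA(t)-2p,\qquad B'(t)=\tfrac{\sigma^2}{2}A(t).
$$

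Second, I would linearize the Riccati by the substitution $A=-y'/(\sigma^2 y)$, which transforms it into the constant-coefficient equation $y''-2ay'-2\sigma^2 p\,y=0$, whose characteristic roots are $a\pm\varrho$ with $\varrho=(a^2+2\sigma^2 p)^{1/2}$. The terminal condition $A(T)=0$ is the same as $y'(T)=0$, which pins down $y$ (up to an irrelevant scalar): up to normalization,
$$
y(t)=(\varrho-a)e^{(a-\varrho)T}e^{(a+\varrho)t}+(\varrho+a)e^{(a+\varrho)T}e^{(a-\varrho)t}.
$$
A direct computation then gives
$$
y(T)=2\varrho e^{2aT},\qquad y(0)=2\varrho e^{aT}\bigl(\cosh(\varrho T)+(a/\varrho)\sinh(\varrho T)\bigr),
$$
and the identity $\varrho^2-a^2=2\sigma^2 p$ together with the relation
$$
\cosh(\varrho T)+\tfrac{a}{\varrho}\sinh(\varrho T)-e^{-\varrho T}=\tfrac{\varrho+a}{\varrho}\sinh(\varrho T)
$$
will bring $A(0)/2$ exactly into the stated form $\psi$. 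This algebraic massaging is where I expect the main bookkeeping effort.

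Third, integrating $B'=\tfrac{\sigma^2}{2}A$ collapses beautifully: $\int_0^T A(s)\,ds=-\sigma^{-2}\log(y(T)/y(0))$, so
$$
e^{B(0)}=\sqrt{\frac{y(T)}{y(0)}}=\left(\frac{e^{aT}}{\cosh(\varrho T)+(a/\varrho)\sinh(\varrho T)}\right)^{1/2},
$$
which is the first factor in \eqref{OU-Psi-IC}.

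Finally, I would incorporate the random initial condition by the tower property:
$$
\bE\exp\Bigl(-p\int_0^T x^2(t)\,dt\Bigr)=\bE\bigl[\Psi(0,x(0))\bigr]=e^{B(0)}\,\bE\bigl[e^{-\psi x(0)^2}\bigr].
$$
Computing the remaining one-dimensional Gaussian integral against the $N(\mu_0,\sigma_0^2)$ density by completing the square produces the factor $(1+2\sigma_0^2\psi)^{-1/2}\exp\bigl(-\psi\mu_0^2/(1+2\sigma_0^2\psi)\bigr)$, and combining with $e^{B(0)}$ yields the stated formula. The only real obstacle is the algebraic reformulation of $A(0)/2$ into the prescribed expression for $\psi$; everything else is the classical Riccati recipe.
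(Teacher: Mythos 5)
Your proposal is correct, and it takes a genuinely different route from the paper. The paper's own ``proof'' is a one-line citation: the Laplace transform formula is attributed to a direct computation via \cite[Theorem 3]{Koncz} (with \cite[Lemma 17.3]{LSh-II} cited for the $\mu_0=\sigma_0=0$ case). Your proposal instead gives a self-contained derivation through the Feynman--Kac PDE, a Gaussian ansatz reducing to a scalar Riccati equation, linearization $A=-y'/(\sigma^2 y)$, and a final Gaussian integral over the initial condition. I checked the key intermediate facts and they are all right: the Riccati system $A'=\sigma^2A^2+2aA-2p$, $B'=\tfrac{\sigma^2}{2}A$ with $A(T)=B(T)=0$; the linearized equation $y''-2ay'-2\sigma^2py=0$ with $y'(T)=0$; the evaluations $y(T)=2\varrho e^{2aT}$ and $y(0)=2\varrho e^{aT}\bigl(\cosh(\varrho T)+(a/\varrho)\sinh(\varrho T)\bigr)$; the identity $\psi=\tfrac12 A(0)=-y'(0)/(2\sigma^2 y(0))=p\sinh(\varrho T)/\bigl(\varrho\cosh(\varrho T)+a\sinh(\varrho T)\bigr)$, which, using $\varrho^2-a^2=2\sigma^2p$ and $\cosh(\varrho T)+(a/\varrho)\sinh(\varrho T)-e^{-\varrho T}=\tfrac{\varrho+a}{\varrho}\sinh(\varrho T)$, matches the stated expression for $\psi$; and the formula $e^{B(0)}=\sqrt{y(T)/y(0)}$, which gives the first factor. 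The Gaussian averaging of $e^{-\psi x(0)^2}$ with $x(0)\sim N(\mu_0,\sigma_0^2)$ correctly produces $(1+2\sigma_0^2\psi)^{-1/2}\exp(-\psi\mu_0^2/(1+2\sigma_0^2\psi))$.

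What the two routes buy: the paper's citation is concise and sufficient for a paper whose focus is elsewhere, but it requires the reader to locate and unwind Koncz's statement; your approach is longer but fully self-contained, makes the structure (log-Laplace transform as a Riccati ODE) transparent, and generalizes immediately to time-dependent coefficients. The one technical point your sketch leaves implicit is the verification step that the PDE solution coincides with the stochastic representation (the potential $px^2$ is unbounded); this is standard for Gaussian linear-quadratic problems but should be acknowledged in a final write-up.
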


\begin{proof} This follows by direct computation using
\cite[Theorem 3]{Koncz}; see also
\cite[Lemma 17.3]{LSh-II} when  $\mu_0=\sigma_0=0$.
\end{proof}

\begin{corollary}
For the standard Brownian motion, with $a=\mu_0=\sigma_0=0$, and
$\sigma=1$, equality \eqref{OU-Psi-IC} becomes the
well-known Cameron-Martin formula:
\begin{equation}
\label{CMF}
\bE \exp\left( -p\int_0^T w^2(s)ds\right) =
\Big(\cosh(\sqrt{2p}\ T)\Big)^{-1/2}.
\end{equation}
\end{corollary}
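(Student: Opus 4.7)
The plan is simply to substitute the special values $a=0$, $\mu_0=0$, $\sigma_0=0$, $\sigma=1$ into formula \eqref{OU-Psi-IC} and simplify. No deep argument is needed; this is a direct verification that the general Laplace transform collapses to the classical Cameron-Martin expression once the Ornstein-Uhlenbeck drift and the random initial data are switched off.

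Concretely, I would proceed as follows. First, with $\sigma=1$ and $a=0$, the auxiliary quantity becomes
$$
\varrho=(a^2+2\sigma^2 p)^{1/2}=\sqrt{2p},
$$
so $a/\varrho=0$ and $e^{aT}=1$. Substituting these into the first factor of \eqref{OU-Psi-IC} yields
$$
\left(\frac{e^{aT}}{\cosh(\varrho T)+(a/\varrho)\sinh(\varrho T)}\right)^{1/2}
=\bigl(\cosh(\sqrt{2p}\,T)\bigr)^{-1/2}.
$$

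Next, for the second factor I would use $\mu_0=0$ and $\sigma_0=0$ directly: the numerator $\exp\bigl(-\psi\mu_0^2/(1+2\sigma_0^2\psi)\bigr)$ equals $e^{0}=1$, and the denominator $\sqrt{1+2\sigma_0^2\psi}$ equals $1$, so the whole second factor in \eqref{OU-Psi-IC} is just $1$, regardless of the precise value of $\psi$ (which need not even be computed). Combining the two factors gives the Cameron-Martin formula \eqref{CMF}.

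There is no genuine obstacle; the only minor subtlety is noting that when $a=0$ the indeterminate-looking expression $(a/\varrho)\sinh(\varrho T)$ is unambiguously zero because $a/\varrho=0/\sqrt{2p}=0$ for every $p>0$, so the limit $a\to 0$ is not even required.
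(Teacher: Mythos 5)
Your substitution is correct and is exactly what the corollary asks for; the paper itself gives no separate proof of this corollary beyond the implicit "plug in $a=\mu_0=\sigma_0=0$, $\sigma=1$," so your argument matches the intended one. The observation that $\psi$ need not be computed once $\mu_0=\sigma_0=0$ is a nice, correct simplification.
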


As an illustration of Theorem \ref{thm:tauberian}, let us
confirm  \eqref{SB-sBM}.

\begin{corollary}
\label{cor:NBM}
If $\bld{w}=\bld{w}(t)$,
$0\leq t\leq T,$ is an $N$-dimensional standard Brownian motion, then
\begin{equation}
\label{SB-sBM-1}
 \ln \mathbb{P}\left(\int_0^T  |\bld{w}(t)|^2 dt \leq \varepsilon \right)
  \sim -\frac{N^2T^2}{8}\, \varepsilon^{-1},\ \varepsilon\to 0.
\end{equation}
\end{corollary}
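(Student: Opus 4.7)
The plan is to reduce to the Cameron--Martin formula \eqref{CMF} by independence of coordinates, read off the Laplace-transform asymptotic, and then feed the result into the exponential Tauberian theorem (Theorem \ref{thm:tauberian}). Writing $\bld{w}(t)=(w_1(t),\ldots,w_N(t))$ with independent standard one-dimensional Brownian motions $w_i$, independence yields
$$
\bE \exp\!\left(-p\int_0^T |\bld{w}(t)|^2\,dt\right)
=\prod_{i=1}^N \bE\exp\!\left(-p\int_0^T w_i^2(t)\,dt\right)
=\bigl(\cosh(\sqrt{2p}\,T)\bigr)^{-N/2}.
$$

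Taking logarithms and using $\ln\cosh x \sim x$ as $x\to+\infty$, I would obtain
$$
\ln \bE\exp\!\left(-p\int_0^T |\bld{w}(t)|^2\,dt\right)
=-\frac{N}{2}\ln\cosh(\sqrt{2p}\,T)
\sim -\frac{NT}{\sqrt{2}}\,p^{1/2},\ p\to+\infty.
$$
This is exactly the hypothesis of Theorem \ref{thm:tauberian} with $\tau=1/2$ and $\alpha=NT/\sqrt{2}$.

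Finally I would substitute these values into Theorem \ref{thm:tauberian}. With $\tau=1/2$, the small ball rate is $\varpi=\tau/(1-\tau)=1$, and the small ball constant becomes
$$
\mathfrak{C}=\bigl((1-\tau)\alpha\bigr)^{1/(1-\tau)}\left(\frac{\tau}{1-\tau}\right)^{\tau/(1-\tau)}
=\left(\frac{\alpha}{2}\right)^{2}\cdot 1=\frac{\alpha^2}{4}=\frac{N^2T^2}{8},
$$
which gives \eqref{SB-sBM-1}. There is essentially no obstacle here: the only care is the bookkeeping of constants in Theorem \ref{thm:tauberian} at the endpoint $\tau=1/2$ and the elementary check that $\ln\cosh(\sqrt{2p}\,T)=\sqrt{2p}\,T+O(e^{-2\sqrt{2p}\,T})$, which justifies the asymptotic equivalence (not merely the leading-order inequality) required to invoke the Tauberian result.
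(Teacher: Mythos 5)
Your proof is correct and follows exactly the paper's approach: independence of coordinates reduces the Laplace transform to the $N$-th power of the Cameron--Martin formula \eqref{CMF}, the asymptotic $\ln\cosh x\sim x$ gives $\alpha=NT/\sqrt{2}$ and $\tau=1/2$, and Theorem \ref{thm:tauberian} delivers the stated small ball constant $N^2T^2/8$. The only difference is that you spell out the arithmetic of the constant and the justification of the $\ln\cosh$ asymptotic, which the paper leaves implicit.
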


\begin{proof}
By \eqref{CMF} and independence of the components of $\bld{w}$,
\begin{equation*}
\label{CMF-limN}
\ln  \bE \exp\left( -p\int_0^T |\bld{w}(t)|^2dt\right) \sim
-\frac{NT}{\sqrt{2}}\, \sqrt{p}.
\end{equation*}
Then \eqref{SB-sBM-1}  follows from Theorem \ref{thm:tauberian} with $\alpha=NT/\sqrt{2}$ and
$\tau=1/2$.
\end{proof}

\section{Diagonalizable Stochastic Parabolic Equation}
\label{sec:DSPE}

Let $\opr{A}$ be a positive-definite self-adjoint elliptic operator of order
$2\mm$ on a bounded domain $G\subset \bR^{\dd}$ with sufficiently
smooth boundary; alternatively, $G$ can be a smooth closed $\dd$-dimensional
manifold with smooth measure $d\bld{x}$.
Denote by $\lambda_k,\ k\geq 1,$ the eigenvalues of $\opr{A}$, and
by $\varphi_k,\ k\geq 1,$ the corresponding normalized eigenfunctions.
Our main assumption is that the {\tt Weyl-type asymptotic} holds
for $\lambda_k$:
\begin{equation}
\label{Weyl}
\lambda_k=\mathfrak{S}\,
k^{2\mm/\dd}\big(1+O(k^{-1/\dd})\big),\ k\to \infty,
\end{equation}
with constant $\mathfrak{S}$ depending only on the region $G$;
see \cite[Theorem 1.2.1]{SafVas}. For example,
 if $\opr{A}=-\bld{\Delta}$ on $G\subset\bR^{\dd}$
 with zero boundary conditions, and $|G|$ is
 the Lebesgue measure of $G$, then $\mm=1$ and
 $$
 \mathfrak{S}=
  4\pi 
  \left(\frac{\Gamma\left(1+\frac{\dd}{2}\right)}{|G|}\right)^{2/\dd}.
 $$

For $f\in \mathcal{C}_{0}^{\infty}(G)$
[that is, $f$ is a smooth compactly supported real-valued function on $G$]
 and $\gamma\in \bR$, define
\begin{equation*}
\label{SobNorm}
 f_k=\int_G f(\bld{x})\varphi_k(\bld{x})d\bld{x} \ \ {\rm and \ } \
\|f\|_{\gamma}^2=\sum_{k=1}^{\infty} \lambda_k^{\gamma/\mm}f_k^2.
\end{equation*}
Then define the space $H^{\gamma}$ as the closure of $\mathcal{C}_0^{\infty}(G)$
with respect to the norm $\|\cdot\|_{\gamma}$. In particular,
$$
H^0=L_2(G),\
H^{\gamma} = \opr{A}^{\gamma/(2\mm)} \big(L_2(G)\big),\
\|\opr{A}^sf\|_{\gamma}=\|f\|_{\gamma+2\mm s}.
$$
We also define
\begin{equation*}
\label{notations}
\begin{split}
&\langle f, g \rangle =
\int_G (\opr{A}^{-\gamma} f)(\bld{x})(\opr{A}^{\gamma} g)(\bld{x})
d\bld{x},\ f\in H^{\gamma},\ g\in H^{-\gamma};\\
&H^{\infty} =\bigcap_{\gamma}  H^{\gamma},\
H_{\infty} =\bigcup_{\gamma} H^{\gamma},
\end{split}
\end{equation*}
and identify $f\in H^{\gamma}$ with a possibly divergent series
\begin{equation*}
\label{f-sum}
f=\sum_{k\geq 1} f_k \varphi_k,\ f_k=\langle f, \varphi_k \rangle.
\end{equation*}
Note that $\varphi_k\in H^{\infty}$ for all $k\geq 1$.

\begin{definition}
A cylindrical Brownian motion on $L_2(G)$ is a Gaussian process
$\bld{W}=\bld{W}(t,f)$, indexed by $t\in [0,T]$ and $f\in L_2(G)$,
such that
$$
\bE  \bld{W}(t,f) = 0,\ \bE \big(\bld{W}(t,f)\bld{W}(s,g)\big) = \min(t,s)\int_{G}f(\bld{x})g(\bld{x})d\bld{x}.
$$
\end{definition}

\begin{proposition}
\label{prop:WN}
Define
$$
w_k(t)=\bld{W}(t,\varphi_k)
$$
and
 \begin{equation}
 \label{cbm-sum}
 W(t)=\sum_{k=1}^{\infty} w_k(t)\varphi_k.
 \end{equation}
 Then, for  $\gamma>\frac{\dd}{2}$,
 \begin{equation}
\label{sbm-reg}
W\in L_2\Big(\Omega\times (0,T);H^{-\gamma}\Big),\ \
 \end{equation}
 and
 \begin{equation}
 \label{cBM-gm}
 \bld{W}(t,f)=\langle W(t), f\rangle,\ f\in H^{\gamma}.
 \end{equation}
 \end{proposition}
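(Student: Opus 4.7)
The first step is to pin down the basic law of the coefficient processes $w_k$. From the definition of $\bld{W}$ we have $\bE w_k(t)=0$ and
$\bE\,w_k(t)w_j(s)=\min(t,s)\int_G \varphi_k(\bld{x})\varphi_j(\bld{x})\,\dd\bld{x}=\min(t,s)\,\delta_{kj}$,
since $\{\varphi_k\}$ is an orthonormal basis of $L_2(G)$. Because $(w_k)$ is jointly Gaussian, vanishing covariance gives independence, so $\{w_k\}_{k\ge 1}$ is a sequence of independent standard Brownian motions. This will be the only probabilistic input.

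Next I would verify \eqref{sbm-reg} via a truncation argument. Set $W_N(t)=\sum_{k=1}^N w_k(t)\varphi_k$, which lives in $H^\infty$ pathwise, and compute directly
\begin{equation*}
\bE\int_0^T \|W_N(t)-W_M(t)\|_{-\gamma}^2\,\dd t
=\sum_{k=M+1}^N \lambda_k^{-\gamma/\mm}\int_0^T \bE\,w_k(t)^2\,\dd t
=\frac{T^2}{2}\sum_{k=M+1}^N \lambda_k^{-\gamma/\mm},
\end{equation*}
using the orthogonality of the $\varphi_k$ in the $H^{-\gamma}$ norm. The Weyl-type asymptotic \eqref{Weyl} gives $\lambda_k^{-\gamma/\mm}\asymp \mathfrak{S}^{-\gamma/\mm}k^{-2\gamma/\dd}$, and this tail is summable precisely when $\gamma>\dd/2$. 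Hence $\{W_N\}$ is Cauchy in $L_2(\Omega\times(0,T);H^{-\gamma})$ and its limit is, by construction, the formal series \eqref{cbm-sum}; this establishes \eqref{sbm-reg} with $\bE\int_0^T\|W(t)\|_{-\gamma}^2\,\dd t=(T^2/2)\sum_k\lambda_k^{-\gamma/\mm}<\infty$.

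For \eqref{cBM-gm} I would proceed by linearity and density. For the eigenfunctions themselves the identity reduces to the definition $w_k(t)=\bld{W}(t,\varphi_k)=\langle W(t),\varphi_k\rangle$, so it extends by linearity to every finite sum $f^N=\sum_{k=1}^N f_k\varphi_k$, where $f_k=\langle f,\varphi_k\rangle$. Now take $f\in H^\gamma$ with $\gamma>\dd/2$. On one side, since $H^\gamma\subset L_2(G)$ and $\bld{W}(t,\cdot)$ is an $L_2$-isometry into $L_2(\Omega)$, we have $\bld{W}(t,f^N)\to\bld{W}(t,f)$ in $L_2(\Omega)$. On the other side, Cauchy--Schwarz in the Sobolev duality gives
\begin{equation*}
\bE\,|\langle W(t),f-f^N\rangle|^2
\le \|f-f^N\|_\gamma^2\,\bE\,\|W(t)\|_{-\gamma}^2
\longrightarrow 0,
\end{equation*}
so $\langle W(t),f^N\rangle\to\langle W(t),f\rangle$ in $L_2(\Omega)$ as well. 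Both limits coincide with $\sum_k f_k w_k(t)$, which yields \eqref{cBM-gm}.

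I expect the only delicate point to be a clean bookkeeping of the two meanings of convergence in the identification of $W(t)$ as an element of $H^{-\gamma}$ versus as the generator of the pairing $\langle W(t),\cdot\rangle$; everything else is a direct calculation leaning on Weyl's law \eqref{Weyl} together with orthonormality of $\{\varphi_k\}$.
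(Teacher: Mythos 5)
Your proof is correct and follows essentially the same route as the paper: identify the $w_k$ as independent standard Brownian motions from the covariance structure, compute $\bE\|W(t)\|_{-\gamma}^2=t\sum_k\lambda_k^{-\gamma/\mm}$ and invoke Weyl's law for convergence when $\gamma>\dd/2$, and obtain \eqref{cBM-gm} by density from the finite-rank case using $\bE|\langle W(t),f\rangle|^2=t\|f\|_0^2$. You spell out the Cauchy-in-$L_2$ truncation and the two-sided limit argument for the duality pairing, which the paper compresses into ``by direct computation''; the only thing the paper adds that you omit is the extra observation (via Kolmogorov's criterion) that $W$ has a modification with continuous paths in $H^{-\gamma}$, which goes beyond what the proposition as stated actually asserts.
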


 \begin{proof}
 Definition of $\bld{W}$ implies that
 $w_k(t)$ are independent standard Brownian motions. Then
 both \eqref{sbm-reg} and \eqref{cBM-gm} follow by
 direct computation.
In particular,  by \eqref{Weyl},
$$
\bE\|W(t)\|^2_{-\gamma} = t\sum_{k=1}^{\infty}
\lambda_k^{-\gamma/\mm}<\infty
$$
 if and only if $\gamma>\dd/2$.
Moreover,
$$
\bE \big| \langle W(t), f \rangle \big|^2 = t\|f\|_0^2,
$$
so that \eqref{cBM-gm} extends to $f\in L_2(G)$, and,
 by Kolmogorov's criterion, $W$ has a modification in
 $L_2\Big(\Omega;\mathcal{C}\big((0,T);H^{-\gamma}\big)\Big)$,
 $\gamma>\frac{\dd}{2}$.
 \end{proof}

 We then get the following analogue of \eqref{SB-sBM-1}.
\begin{theorem}
\label{prop:cBM-SB}
As $\varepsilon\to 0$,
\begin{equation}
\label{SB-cylBM-1}
 \ln \mathbb{P}\left(\int_0^T  \|W(t)\|_{-\gamma}^2 dt \leq \varepsilon \right)
  \sim
  \begin{cases}
 \displaystyle
  - \frac{T^2}{8}
\left(\sum_{k=1}^{\infty} \lambda_k^{-\gamma/(2\mm)}\right)^2\
\varepsilon^{-1},& {\rm if} \ \gamma>\dd,\\
 & \\
\displaystyle
-\frac{\mathfrak{S}^{-\dd/\mm}T^2}{32} \, \varepsilon^{-1}\, |\ln \varepsilon|^2,& {\rm if} \ \gamma=\dd,\\
& \\
\displaystyle  - \mathfrak{C}_{\gamma}
\varepsilon^{-\varpi},& {\rm if} \ \frac{\dd}{2}<\gamma<\dd,
\end{cases}
\end{equation}
where
\begin{align*}
\varpi &=\frac{\dd}{2\gamma-\dd},\
\mathfrak{C}_{\gamma}=(2q-1)^{2q}q^{-2q\varpi}2^{(1-4q)\varpi}
(T\mathfrak{S}^{-\gamma/(2\mm)})^{2\varpi}C_{\gamma}^{2q\varpi},\\
q&=\frac{\gamma}{\dd},\ {C}_{\gamma}=\int_0^{+\infty}
\frac{\ln \cosh(y)}{y^{1+(\dd/\gamma)}}\ dy.
\end{align*}
\end{theorem}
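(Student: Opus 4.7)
The strategy is to compute the Laplace transform of $\xi_\gamma:=\int_0^T\|W(t)\|_{-\gamma}^2\,dt$ explicitly, extract its $p\to+\infty$ asymptotic, and then invoke the Tauberian Theorems \ref{thm:tauberian} and \ref{thm:tauberian1}.

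First, using the eigenfunction expansion \eqref{cbm-sum} and the definition of $\|\cdot\|_{-\gamma}$, Parseval gives
\begin{equation*}
\int_0^T\|W(t)\|_{-\gamma}^2\,dt=\sum_{k=1}^{\infty}\lambda_k^{-\gamma/\mm}\int_0^T w_k^2(t)\,dt,
\end{equation*}
where the $w_k$ are independent standard Brownian motions. Independence together with the Cameron--Martin identity \eqref{CMF} (applied after rescaling time in each $w_k$) yields
\begin{equation*}
\ln\bE\exp(-p\xi_\gamma)=-\tfrac12\sum_{k=1}^{\infty}\ln\cosh\!\bigl(T\sqrt{2p}\,\lambda_k^{-\gamma/(2\mm)}\bigr).
\end{equation*}
Substituting the Weyl asymptotic $\lambda_k^{-\gamma/(2\mm)}\sim\mathfrak{S}^{-\gamma/(2\mm)}k^{-\gamma/\dd}$ from \eqref{Weyl} reduces the problem to the large-$p$ analysis of
\begin{equation*}
\Sigma(c)=\sum_{k\geq 1}\ln\cosh\!\bigl(c\,k^{-q}\bigr),\qquad q=\gamma/\dd,\qquad c=T\sqrt{2p}\,\mathfrak{S}^{-\gamma/(2\mm)}.
\end{equation*}

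Next I split into the three regimes and apply the matching Tauberian theorem.

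\emph{Case $\gamma>\dd$ ($q>1$).} Since $\sum_k\lambda_k^{-\gamma/(2\mm)}<\infty$ and $\ln\cosh(x)=x-\ln 2+o(1)$ as $x\to\infty$, dominated convergence gives $\Sigma(c)\sim c\sum_k k^{-q}$ only in the wrong direction; the correct leading behavior is actually $\Sigma(c)\sim T\sqrt{2p}\sum_k\lambda_k^{-\gamma/(2\mm)}$, obtained by dominated convergence applied to $\Sigma(c)/c$. Then $\ln\bE e^{-p\xi_\gamma}\sim -\alpha\sqrt{p}$ with $\alpha=\tfrac{T}{\sqrt{2}}\sum_k\lambda_k^{-\gamma/(2\mm)}$, and Theorem \ref{thm:tauberian} with $\tau=\tfrac12$ delivers the stated $\tfrac{T^2}{8}\bigl(\sum_k\lambda_k^{-\gamma/(2\mm)}\bigr)^2\varepsilon^{-1}$.

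\emph{Case $\dd/2<\gamma<\dd$ ($\tfrac12<q<1$).} I would compare the sum with the integral $\int_0^\infty\ln\cosh(c\,k^{-q})\,dk$, make the substitution $y=c\,k^{-q}$, and obtain
\begin{equation*}
\Sigma(c)\sim \frac{c^{1/q}}{q}\int_0^\infty\frac{\ln\cosh(y)}{y^{1+1/q}}\,dy=\frac{C_\gamma}{q}\,c^{1/q},
\end{equation*}
the integral being convergent precisely because $\tfrac12<q<1$ (subquadratic behavior at $0$ and linear growth at $\infty$). Hence $\ln\bE e^{-p\xi_\gamma}\sim-\alpha p^{1/(2q)}$ with an explicit $\alpha$, so Theorem \ref{thm:tauberian} with $\tau=1/(2q)$ produces the rate $\varpi=\dd/(2\gamma-\dd)$ and a small ball constant that, after algebraic simplification, matches $\mathfrak{C}_\gamma$.

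\emph{Case $\gamma=\dd$ ($q=1$).} Here the integral for $C_\gamma$ just fails logarithmically. The same integral comparison with $q=1$ produces $\int_{1}^{c}\ln\cosh(y)/y^2\,dy\sim\ln c$ at the top of its range, giving $\Sigma(c)\sim c\ln c\sim\tfrac12 c\ln p$, and hence $\ln\bE e^{-p\xi_\gamma}\sim-\alpha\sqrt{p}\ln p$ with $\alpha=\tfrac{T\mathfrak{S}^{-\dd/(2\mm)}}{2\sqrt{2}}$. Theorem \ref{thm:tauberian1} with $\tau=\tfrac12,\ \beta=1$ then yields the $|\ln\varepsilon|^2$ factor and the constant $\mathfrak{S}^{-\dd/\mm}T^2/32$.

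The routine ingredients are the Laplace computation in step~1 and plugging into the Tauberian theorems. The substantive obstacle is the uniform sum--integral comparison for $\Sigma(c)$: one must justify the replacement with quantitative error bounds that are negligible compared to the leading term, control the effect of the $O(k^{-1/\dd})$ correction in the Weyl asymptotic \eqref{Weyl}, and, in the critical case $\gamma=\dd$, carefully track where the logarithmic divergence is truncated (around $k\asymp c$). Once those estimates are in place, the three asymptotic regimes follow almost mechanically from Theorems \ref{thm:tauberian} and \ref{thm:tauberian1}.
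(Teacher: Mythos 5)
Your proposal follows essentially the same path as the paper: compute the Laplace transform of $\int_0^T\|W(t)\|_{-\gamma}^2\,dt$ via the Cameron--Martin formula, extract its $p\to\infty$ asymptotics by dominated convergence when $\gamma>\dd$ and by sum--integral comparison when $\frac{\dd}{2}<\gamma\le\dd$, and feed the result into Theorems \ref{thm:tauberian} and \ref{thm:tauberian1}. One minor inconsistency to watch: you define $\Sigma(c)=\sum_k\ln\cosh(ck^{-q})$ after already replacing $\lambda_k^{-\gamma/(2\mm)}$ by $\mathfrak{S}^{-\gamma/(2\mm)}k^{-q}$, but when $\gamma>\dd$ the series converges and every term matters, so the limiting constant is $\sum_k\lambda_k^{-\gamma/(2\mm)}$, not $\mathfrak{S}^{-\gamma/(2\mm)}\sum_k k^{-q}$; the paper (and your corrected statement) apply dominated convergence directly to the original sum in $\lambda_k$, reserving the Weyl substitution for the regimes $\gamma\le\dd$ where only the tail of the series governs the asymptotics.
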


\begin{proof}
By \eqref{CMF} and \eqref{cbm-sum},
\begin{align}
\notag \bE \exp\left(-p \int_0^T\|W(t)\|_{-\gamma}^2 dt\right)&=
\prod_{k=1}^{\infty}\bE
\exp\left(-p \lambda_k^{-\gamma/\mm}\int_0^T\|w_k(t)\|^2 dt\right)
\\
\label{cBM-p}
& = \exp\left(-\frac{1}{2}\sum_{k=1}^{\infty}
\ln \cosh\left(T\sqrt{2p\lambda_k^{-\gamma/\mm}}\;\right)\right),
\end{align}
and by \eqref{Weyl},
$$
\lambda_k^{-\gamma/\mm}\sim \mathfrak{S}^{-\gamma/\mm} k^{-2\gamma/\dd},\
k\to \infty.
$$

If $\gamma>\dd$, then the series $\sum_k \lambda_k^{-\gamma/(2\mm)}$
converges, and the dominated convergence theorem implies
$$
\lim_{p\to \infty} p^{-1/2}\sum_{k=1}^{\infty}
\ln \cosh\left(T\sqrt{2p\lambda_k^{-\gamma/\mm}}\;\right)
={T}{\sqrt{2}}\left(
\sum_{k=1}^{\infty} \lambda_k^{-\gamma/(2\mm)}
\right),
$$
or
$$
-\frac{1}{2}\sum_{k=1}^{\infty}
\ln \cosh\left(T\sqrt{2p\lambda_k^{-\gamma/\mm}}\;\right)
\sim -\frac{T}{\sqrt{2}}\left(
\sum_{k=1}^{\infty} \lambda_k^{-\gamma/(2\mm)}
\right)
\ \sqrt{p},\ \ p\to +\infty,
$$
so that  the first relation in
\eqref{SB-cylBM-1} follows from Theorem \ref{thm:tauberian}.

If $\frac{\dd}{2}< \gamma \leq \dd$, then  we establish the  asymptotic of
\eqref{cBM-p} by comparison with a suitable integral.

Note that
\begin{equation*}
\label{aux-A}
T\sqrt{2p\lambda_k^{-\gamma/\mm} }= \sqrt{p} f(k),
\end{equation*}
and the function $f=f(x)$ satisfies
$$
f(x)=A_{\gamma} x^{-\gamma/\dd} (1+O(x^{-1/\dd})), \ x\to +\infty,
$$
with $A_{\gamma}=2^{1/2}T\mathfrak{S}^{-\gamma/(2\mm)}$.
Let $q=\gamma/\dd$.
By direct computation, as $p\to +\infty$,
\begin{align*}
\sum_{k=1}^{\infty}&
\ln \cosh\big(T\sqrt{2p\lambda_k^{-\gamma/\mm}}\big)
\sim
\int_1^{\infty} \ln  \cosh \big(\sqrt{p}f(x)\big)dx  \\
&\sim q^{-1}(A_{\gamma}\sqrt{p})^{1/q}
\int\limits_0^{A_{\gamma}\sqrt{p}}
 \frac{\ln \cosh (y)}{y^{1+(1/q)}}\,dy
\sim
\begin{cases}
2^{-1}A_{\gamma} \ \sqrt{p}\,\ln p,& \ {\rm if } \ q=1;\\
 &\\
q^{-1}A_{\gamma}^{1/q} C_q\, p^{1/(2q)},&\ {\rm if } \ \frac{1}{2}<q<1.
\end{cases}
\end{align*}
After that, Theorems \ref{thm:tauberian1} and
  \ref{thm:tauberian} imply the remaining relations in \eqref{SB-cylBM-1}.

\end{proof}

We now use the process $W$ to construct an infinite-dimensional analogue
of \eqref{OU-fd}.

Given $r>0$, consider the equation
\begin{equation}
\label{PE}
\dot{u}(t)+\opr{A}^ru(t)=\dot{W}(t),\ 0<t\leq T,\ u(0)=0.
\end{equation}
For example, with
\begin{equation*}
%\label{example-opr}
\begin{split}
 &G=[0,\pi],\\
 & \opr{A}=-\partial^2/\partial x^2, \ {\rm  \ zero \ boundary \
 conditions},\\
 & \lambda_k=k^2,\ \varphi_k(x)=(2/\pi)^{1/2} \sin(kx).
 \end{split}
 \end{equation*}
 and  $r=1$,
equation \eqref{PE} becomes
\begin{equation}
\label{example-PE}
u_t=u_{xx}+\dot{W}(t,x), \ 0<t\leq T,\ 0<x<\pi,\ u(0,x)=u(t,0)=u(t,\pi)=0.
\end{equation}

\begin{definition}
The solution of the equation \eqref{PE}
is a mapping from $\Omega\times [0,T]$ to $H_{\infty}$
with the following properties:
\begin{enumerate}
\item There exists a $\gamma\in \bR$ such that
$
u\in L_2\Big(\Omega; \mathcal{C}\big((0,T); H^{\gamma}\big)\Big).
$
\item For every $h\in H^{-\gamma}$,
the process $\langle u(t), h\rangle $, $0\leq t\leq T$,
 is $\mathcal{F}_t$-adapted.
\item For every $h\in H^{\infty}$, the equality
\begin{equation}
\label{PE-sol}
\langle u(t), h\rangle + \int_0^t \langle u(s), \opr{A}^{r}h\rangle ds =
\langle W(t),h\rangle
\end{equation}
holds in $L_2\big(\Omega\times(0,T)\big).$
\end{enumerate}
\end{definition}

\begin{proposition}
\label{prop:HE}
Equation \eqref{PE} has a unique solution $u=u(t)$.
Moreover, for every $\gamma>\frac{d}{2}$,
\begin{equation}
\label{PE-reg1}
u\in L_2\Big(\Omega; L_2\big((0,T); H^{-\gamma+r\mm}\big)\Big)
\bigcap L_2\Big(\Omega; \mathcal{C}\big((0,T); H^{-\gamma}\big)\Big).
\end{equation}
\end{proposition}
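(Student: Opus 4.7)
The plan is to project equation \eqref{PE} onto the eigenbasis $\{\varphi_k\}_{k\geq 1}$, solve each resulting one-dimensional SDE explicitly, and then read off the stated regularity using Weyl's asymptotic \eqref{Weyl}.

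First I would set $u_k(t):=\langle u(t),\varphi_k\rangle$ and test \eqref{PE-sol} with $h=\varphi_k$. Using $\opr{A}^r\varphi_k=\lambda_k^r\varphi_k$ and $\langle W(t),\varphi_k\rangle=w_k(t)$ produces the integrated linear SDE
$$
u_k(t)+\lambda_k^r\int_0^t u_k(s)\,ds=w_k(t),
$$
whose unique Ornstein--Uhlenbeck solution is $u_k(t)=\int_0^t e^{-\lambda_k^r(t-s)}\,dw_k(s)$, a mean-zero Gaussian process with $\bE u_k^2(t)=(1-e^{-2\lambda_k^r t})/(2\lambda_k^r)\leq\min\bigl(t,(2\lambda_k^r)^{-1}\bigr)$. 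Since any solution of \eqref{PE} must have coordinates satisfying this same one-dimensional SDE a.s., uniqueness of \eqref{PE} reduces to uniqueness of the OU equation; it then remains to verify that the candidate $u(t):=\sum_{k\geq 1}u_k(t)\varphi_k$ has the claimed regularity.

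For the first inclusion in \eqref{PE-reg1}, Fubini gives
$$
\bE\int_0^T\|u(t)\|_{-\gamma+r\mm}^2\,dt=\sum_{k=1}^{\infty}\lambda_k^{r-\gamma/\mm}\int_0^T\bE u_k^2(t)\,dt\leq\frac{T}{2}\sum_{k=1}^{\infty}\lambda_k^{-\gamma/\mm},
$$
and by \eqref{Weyl} the last series is finite precisely when $\gamma>\dd/2$. The same estimate shows the partial sums $\sum_{k\leq N}u_k\varphi_k$ are Cauchy in $L_2\bigl(\Omega;L_2((0,T);H^{-\gamma+r\mm})\bigr)$, so the candidate $u$ is well-defined in this space.

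For the continuity inclusion I would apply It\^o's formula to obtain
$$
u_k^2(t)+2\lambda_k^r\int_0^t u_k^2(s)\,ds=t+M_k(t),\qquad M_k(t):=2\int_0^t u_k(s)\,dw_k(s),
$$
giving the pointwise bound $u_k^2(t)\leq t+|M_k(t)|$. Doob's $L^2$-inequality together with $\bE M_k^2(T)=4\int_0^T\bE u_k^2(s)\,ds\leq 2T/\lambda_k^r$ then yields $\bE\sup_{t\leq T}u_k^2(t)\leq T+\sqrt{8T/\lambda_k^r}$, and summing against $\lambda_k^{-\gamma/\mm}$ produces a finite value of $\bE\sup_{t\leq T}\|u(t)\|_{-\gamma}^2$ by the same Weyl calculation. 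Since each $u_k$ is a.s.\ continuous, this gives convergence of the partial sums in $L_2\bigl(\Omega;\mathcal{C}((0,T);H^{-\gamma})\bigr)$ and hence continuity of $u$. The only delicate point, such as it is, will be the interchange of infinite sum with the supremum and expectation; this is handled by monotone convergence applied to the nonnegative partial sums.
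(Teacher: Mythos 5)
Your proposal is correct, and it follows the paper's overall structure: project onto the eigenbasis via $h=\varphi_k$, solve each resulting scalar Ornstein--Uhlenbeck equation explicitly, compute $\bE u_k^2(t)$, and use Weyl's asymptotic \eqref{Weyl} to sum the coordinates. The $L_2((0,T);H^{-\gamma+r\mm})$ regularity and the uniqueness argument (coordinates are uniquely determined) coincide with the paper's.

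Where you differ is in establishing the $\mathcal{C}((0,T);H^{-\gamma})$ regularity. The paper observes $\bE(u_k(t)-u_k(s))^2\le|t-s|$, sums to get $\bE\|u(t)-u(s)\|_{-\gamma}^2\le|t-s|\sum_k\lambda_k^{-\gamma/\mm}$, and invokes Kolmogorov's continuity criterion; for the Hilbert-space-valued Gaussian process this implicitly uses Gaussian hypercontractivity to upgrade the second-moment bound to a higher moment. You instead apply It\^o's formula to $u_k^2$, drop the nonnegative drift term to get $u_k^2(t)\le t+|M_k(t)|$, control $\sup_t|M_k(t)|$ by Doob's $L^2$ maximal inequality, and then show the partial sums $\sum_{k\le N}u_k\varphi_k$ are Cauchy in the Banach space $L_2\bigl(\Omega;\mathcal{C}((0,T);H^{-\gamma})\bigr)$. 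Your computations check out: $\bE M_k^2(T)\le 2T/\lambda_k^r$ gives $\bE\sup_t u_k^2(t)\le T+\sqrt{8T/\lambda_k^r}$, and both resulting series $\sum_k\lambda_k^{-\gamma/\mm}$ and $\sum_k\lambda_k^{-\gamma/\mm-r/2}$ converge for $\gamma>\dd/2$ by Weyl. Your route is a bit more hands-on and avoids the Kolmogorov/hypercontractivity machinery; the paper's route is shorter and more standard in the SPDE literature. Both are valid proofs of the same inclusion.
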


\begin{proof} The result can be derived from general
existence and uniqueness theorems for stochastic evolution equations,
such as  \cite[Theorem 5.4]{DaPr1-2} or \cite[Theorem 3.1.1]{Roz};
below is an outline of a direct proof.

Taking $h=\varphi_k$ in \eqref{PE-sol} we find
\begin{equation}
\label{PE-OUk}
\dot{u}_k(t)=-\lambda_k^{r}u_k(t)+\dot{w}_k(t),\ u_k(0)=0,
\end{equation}
that is,
$$
u_k(t)=\int_0^t e^{-\lambda_k^r(t-s)}dw_k(s).
$$
Then
$$
\bE u_k^2(t)=\frac{1-e^{-2\lambda_k^rT}}{2\lambda_k^r}
$$
and
$$
\bE\int_0^T \|u(t)\|^2_{-\gamma+r\mm}dt \leq
T\sum_{k=1}^{\infty} \lambda_k^{-\gamma/\mm}.
$$
By \eqref{Weyl},
$$
\lambda_k^{-\gamma/\mm}\sim \mathfrak{S}^{-\gamma/\mm} \, k^{-2\gamma/{\dd}},\
k\to \infty.
$$
If $\gamma>\frac{\dd}{2}$, then $-2\gamma/{\dd}<-1$, and
\eqref{PE-reg1} follows.

Similarly,
$$
\bE\big(u_k(t)-u_k(s)\big)^2\leq |t-s|,
$$
and then   Kolmogorov's criterion implies that
$u$ has a modification in $L_2\Big(\Omega; \mathcal{C}\big((0,T); H^{-\gamma}\big)\Big)$.

To establish uniqueness, note that the difference $v$ of two
solutions satisfies the deterministic
 equation $\dot{v}+\opr{A}v=0$ with zero initial condition.
\end{proof}

\begin{theorem}
\label{th:main}
As $\varepsilon\to 0$,
\begin{equation}
\label{SB-heat-1}
 \ln \mathbb{P}\left(
 \int_0^T  \|u(t)\|_{-\gamma}^2 dt \leq \varepsilon \right)
  \sim
  \begin{cases}
  \displaystyle - \frac{T^2}{8}
\left(\sum_{k=1}^{\infty} \lambda_k^{-\gamma/(2\mm)}\right)^2\
\varepsilon^{-1},& {\rm \ if \ } \gamma>\dd,\\
& \\
\displaystyle  -\frac{\mathfrak{S}^{-\dd/\mm}T^2}{32}\,
  \left(\frac{\dd}{\dd+2r\mm}\right)^2 \, \varepsilon^{-1}\,
  |\ln \varepsilon|^2,& {\rm \ if \ } \gamma=\dd,\\
  & \\
 \displaystyle  -\mathfrak{C}_{\gamma,\mm}\, \varepsilon^{-\varpi},
 & {\rm \ if \ } \frac{\dd}{2}-r\mm< \gamma<\dd,
 \end{cases}
\end{equation}
where
\begin{align}
\notag
&\mathfrak{C}_{\gamma,\mm}=\frac{\big((1-\tau)\mathfrak{S}^{-\dd/(2\mm)}T
C_{\gamma,\mm}\big)^{1/(1-\tau)}}{2}\, (\varpi)^{\varpi},\\
\notag
&  \tau=\frac{2r\mm+\dd}{4r\mm+2\gamma},\ \ \varpi=
   \frac{\tau}{1-\tau}=\frac{2r\mm +\dd}{2\gamma+2r\mm-\dd},\\
   \label{PE-SB-C}
&C_{\gamma,\mm}= \int_0^{\infty}
\frac{dy}
{y^{2(r\mm+\gamma)/\dd}+\sqrt{y^{4(r\mm+\gamma)/\dd}
+y^{2\gamma/\dd}}}.
\end{align}
\end{theorem}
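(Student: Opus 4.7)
The plan is to follow the template used for Theorem \ref{prop:cBM-SB}: compute the Laplace transform of $Z=\int_0^T\|u(t)\|_{-\gamma}^2\,dt$ via the earlier Proposition, extract its logarithmic asymptotics as $p\to+\infty$, and then invoke one of the exponential Tauberian Theorems \ref{thm:tauberian} and \ref{thm:tauberian1}. Since $\|u(t)\|_{-\gamma}^2=\sum_k\lambda_k^{-\gamma/\mm}u_k^2(t)$ and the coordinate processes $u_k$ from \eqref{PE-OUk} are independent Ornstein--Uhlenbeck processes starting from zero, applying the Proposition with $a=\lambda_k^r$, $\sigma=1$, $\mu_0=\sigma_0=0$ and $p$ replaced by $p\lambda_k^{-\gamma/\mm}$ gives
\[
-\ln\bE e^{-pZ}=\frac{1}{2}\sum_{k\ge 1}f_k(p),\qquad f_k(p)=\ln\!\Big(\cosh(\varrho_k T)+\frac{\lambda_k^r}{\varrho_k}\sinh(\varrho_k T)\Big)-\lambda_k^r T,
\]
with $\varrho_k=(\lambda_k^{2r}+2p\lambda_k^{-\gamma/\mm})^{1/2}$. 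Pulling $e^{\varrho_k T}$ out of the sum of hyperbolic functions and using $\min_k \varrho_k T\to+\infty$ as $p\to+\infty$ reduces each $f_k$, up to an exponentially small error, to $(\varrho_k-\lambda_k^r)T+\ln\frac{\varrho_k+\lambda_k^r}{2\varrho_k}$.

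If $\gamma>\dd$, then $\sum_k\lambda_k^{-\gamma/(2\mm)}$ converges. For each fixed $k$, $\varrho_k\sim\sqrt{2p}\,\lambda_k^{-\gamma/(2\mm)}$ and $\lambda_k^r/\varrho_k\to 0$, so $f_k/\sqrt p\to T\sqrt 2\,\lambda_k^{-\gamma/(2\mm)}$; the elementary bounds $\varrho_k\le\lambda_k^r+\sqrt{2p}\,\lambda_k^{-\gamma/(2\mm)}$ and $\ln\frac{\varrho_k+\lambda_k^r}{2\varrho_k}\le 0$ give the domination $f_k\le T\sqrt{2p}\,\lambda_k^{-\gamma/(2\mm)}$, and dominated convergence yields $-\ln\bE e^{-pZ}\sim (T/\sqrt 2)\big(\sum_k\lambda_k^{-\gamma/(2\mm)}\big)\sqrt p$. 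Theorem \ref{thm:tauberian} with $\tau=1/2$ then delivers the first case exactly as in the proof of Theorem \ref{prop:cBM-SB}.

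For the remaining cases I would replace the sum by an integral via Weyl's asymptotic \eqref{Weyl} and introduce the crossover scale $k^*$, defined by $\lambda_{k^*}^{2r}=2p\lambda_{k^*}^{-\gamma/\mm}$ and hence satisfying $k^*\sim p^{\dd/(4r\mm+2\gamma)}$. Under the substitution $k=k^*y$ the leading part of $f_k/T$ becomes $A\,g(y)$, where $A=\mathfrak{S}^r(k^*)^{2r\mm/\dd}$ and $g(y)=\sqrt{y^{4r\mm/\dd}+y^{-2\gamma/\dd}}-y^{2r\mm/\dd}$. Rationalising $g$ to $y^{-2\gamma/\dd}/(y^{2r\mm/\dd}+\sqrt{y^{4r\mm/\dd}+y^{-2\gamma/\dd}})$ and then multiplying numerator and denominator by $y^{2\gamma/\dd}$ identifies the integrand of $C_{\gamma,\mm}$ from \eqref{PE-SB-C}, so $\int_0^\infty g(y)\,dy=C_{\gamma,\mm}$ whenever the integral converges; a direct computation yields $Ak^*=\mathfrak{S}^{-\dd/(2\mm)}(2p)^\tau$ with $\tau=(2r\mm+\dd)/(4r\mm+2\gamma)$. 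For $\dd/2-r\mm<\gamma<\dd$, where $g$ is integrable at both $0$ and $\infty$, this produces $-\ln\bE e^{-pZ}\sim (T/2)\mathfrak{S}^{-\dd/(2\mm)}C_{\gamma,\mm}(2p)^\tau$, and Theorem \ref{thm:tauberian} supplies the third case of \eqref{SB-heat-1}. At the boundary $\gamma=\dd$ one has $g(y)\sim y^{-1}$ near $0$, so the truncated integral acquires an additional factor of $\ln k^*\sim\frac{\dd}{2(\dd+2r\mm)}\ln p$, and Theorem \ref{thm:tauberian1} with $\tau=1/2,\beta=1$ yields the middle line.

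The main obstacle is the quantitative control of this sum-to-integral passage: the $O(k^{-1/\dd})$ relative error in Weyl's asymptotic, the small-$k$ boundary contribution, and the tail $k\gg k^*$ (where $\varrho_k-\lambda_k^r\sim p\lambda_k^{-r-\gamma/\mm}$, and integrability of the tail is what forces $\gamma>\dd/2-r\mm$) all have to be shown to contribute only $o(p^\tau)$. The stated range $\dd/2-r\mm<\gamma<\dd$ is exactly the range in which $g$ is absolutely integrable on $(0,\infty)$, with the two boundary exponents emerging as the natural integrability thresholds at infinity and at zero, respectively.
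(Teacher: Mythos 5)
Your proposal follows essentially the same route as the paper's proof: compute the Laplace transform of $\int_0^T\|u(t)\|_{-\gamma}^2\,dt$ from the one-dimensional OU formula, isolate the dominant contribution $\sum_k(\varrho_k-\lambda_k^r)$, pass from the sum to an integral via the Weyl asymptotic (your crossover scale $k^*$ is exactly the paper's $R^{\nu}$), identify $C_{\gamma,\mm}$ by rationalizing $\sqrt{y^{4r\mm/\dd}+y^{-2\gamma/\dd}}-y^{2r\mm/\dd}$, and close with the two Tauberian theorems; your exponents and constants match the paper's in all three regimes.

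One gap you do not flag: in the range $\frac{\dd}{2}-r\mm<\gamma\le\dd$ you retain $\ln\frac{\varrho_k+\lambda_k^r}{2\varrho_k}$ in the ``reduced'' $f_k$ but then replace $f_k/T$ by $A\,g(y)$, thereby silently dropping $\sum_k\ln\frac{\varrho_k+\lambda_k^r}{2\varrho_k}$ without verifying it is $o(p^{\tau})$. The paper treats this term (its $S_2(p)$) via $-\ln\frac{1+B_k}{2}\le 1-B_k$ and $1-(1+x)^{-1/2}\le x/(1+x)$, which yields a sum dominated by $\int_0^{\infty}\big(1+x^{\mu}/p\big)^{-1}dx=O\big(p^{1/\mu}\big)$ with $\mu=(4r\mm+2\gamma)/\dd$; since $1/\mu=\dd/(4r\mm+2\gamma)<\tau$, this is lower order, but it does need saying. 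The obstacle you do flag --- the quantitative sum-to-integral comparison --- is handled in the paper through the differences $S_{f,1}$ (a one-critical-point Euler--Maclaurin bound) and $S_{f,2}$ (using $|\partial f/\partial x|\le C_f f(x;p)/x$ to absorb the Weyl relative error), and your sketch points in the right direction for both. Finally, in the case $\gamma>\dd$ you should note $f_k\ge 0$ (which follows from $\cosh x+B\sinh x\ge e^{Bx}$ for $0\le B\le 1$) so that the dominated-convergence argument has a two-sided bound.
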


\begin{proof}
Define
\begin{equation}
\label{pr-AB}
A_k=\sqrt{\lambda_k^{2r}+2p\lambda_k^{-\gamma/\mm}},\ \ \
B_k=\frac{\lambda_k^r}{A_k}.
\end{equation}
By \eqref{OU-Psi-IC} and \eqref{PE-OUk},
\begin{equation}
\label{pr-1}
\begin{split}
&\bE \exp\left( - p \int_0^T \|u(t)\|_{-\gamma}^2dt\right)
=\prod_{k=1}^{\infty} \bE \exp\left(-p\lambda_k^{-\gamma/\mm}
\int_0^T u_k^2(t)dt\right)\\
&=\exp\left(
\frac{T}{2}\sum_{k=1}^{\infty}  (\lambda_k^r-A_k)
-\frac{1}{2}\sum_{k=1}^{\infty} \ln \left(\frac{1+B_k}{2}\right)
-\frac{1}{2}\sum_{k=1}^{\infty} \ln \left(1+\frac{1-B_k}{1+B_k}e^{-2A_k}
\right)\right)\\
& := \exp\big(-S_1(p)+S_2(p)-S_3(p)\big),
\end{split}
\end{equation}
where
\begin{align}
\label{pr-S1-def}
S_1(p)&=\frac{T}{2}\sum_{k=1}^{\infty}  (A_k-\lambda_k^r),\\
\label{pr-S2-def}
S_2(p)&=-\frac{1}{2}\sum_{k=1}^{\infty} \ln \left(\frac{1+B_k}{2}\right),\\
\label{pr-S3-def}
S_3(p)&=\frac{1}{2}\sum_{k=1}^{\infty}
\ln \left(1+\frac{1-B_k}{1+B_k}e^{-2A_k}\right).
\end{align}
The goal is to show that, as $p\to \infty$,
\begin{equation}
\label{MainAsymptotic}
-\ln\bE \exp\left( - p \int_0^T \|u(t)\|_{-\gamma}^2dt\right)
\sim S_1(p)
\end{equation}
and
\begin{equation}
\label{S1p-main}
S_1(p)
\sim
\begin{cases}
\displaystyle
\left(\frac{T}{\sqrt{2}}\sum_{k=1}^{\infty}\lambda_k^{-\gamma/(2\mm)}\right)
\ {p}^{1/2},& {\rm \ if \ } \gamma>\dd,\\
 & \\
\displaystyle
\frac{T\mathfrak{S}^{-\dd/(2\mm)}}{2^{3/2}}\frac{\dd}{\dd+2r\mm}\
p^{1/2}\, \ln p,& {\rm \ if \ } \gamma=\dd,\\
& \\
\displaystyle
\alpha\, p^{\tau}
& {\rm \ if \ } \
\frac{\dd}{2}-r\mm<\gamma<\dd,
\end{cases}
\end{equation}
where 
$$
\tau=\frac{2r\mm + \dd}{4r\mm+2\gamma},\quad
\alpha=
T\mathfrak{S}^{-\dd/(2\mm)}2^{-(2r\mm+2\gamma-\dd)/(4r\mm+2\gamma)}
C_{\gamma,\mm};
$$
after that, relations \eqref{SB-heat-1} will follow from
Theorems \ref{thm:tauberian}
and \ref{thm:tauberian1}.

We start by establishing \eqref{S1p-main}.
 We then show that $S_2(p)$ and $S_3(p)$  are of lower order
 compared to $S_1(p)$:
 \begin{equation}
 \label{domin}
 S_j(p)=o\big(S_1(p)\big),\ j=2,3,\ p\to \infty.
 \end{equation}

Note that
\begin{equation}
\label{ld-A}
A_k - \lambda_k^r=
\frac{2p\lambda_k^{-\gamma/\mm}}{\lambda_k^r+\sqrt{\lambda_k^{2r}
+2p\lambda_k^{-\gamma/\mm}}}\, ,
\end{equation}
and recall that \eqref{Weyl} holds. If $\gamma>\dd$, then
$$
\sum_{k=1}^{\infty}\lambda_k^{-\gamma/(2\mm)}<\infty.
$$
Since,  for every $k\geq 1$,
$$
\lim_{p\to \infty}
\frac{2\sqrt{p}
\lambda_k^{-\gamma/\mm}}
{\lambda_k^r+\sqrt{\lambda_k^{2r}+2p\lambda_k^{-\gamma/\mm}}}
=\sqrt{2}\lambda_k^{-\gamma/(2\mm)},
$$
 the dominated convergence theorem implies
$$
\lim_{p\to \infty}\sum_{k=1}^{\infty}
\frac{2\sqrt{p}
\lambda_k^{-\gamma/\mm}}
{\lambda_k^r+\sqrt{\lambda_k^{2r}+2p\lambda_k^{-\gamma/\mm}}}
=\sqrt{2}\sum_{k=1}^{\infty}\lambda_k^{-\gamma/(2\mm)}.
$$
 Therefore, for $\gamma>\dd$,
\begin{equation*}
\label{OU-g>d}
S_1(p) \sim
\left(\frac{T}{\sqrt{2}}\sum_{k=1}^{\infty}\lambda_k^{-\gamma/(2\mm)}\right)
\sqrt{p},\ p\to \infty.
\end{equation*}

When $\frac{\dd}{2}-r\mm<\gamma\leq \dd$,   define the function
$$
f(x;p)=
\frac{1}
{x^{2(r\mm+\gamma)/\dd}+\sqrt{x^{4(r\mm+\gamma)/\dd}
+2p\,\mathfrak{S}^{-2r-(\gamma/\mm)}\,x^{2\gamma/\dd}}},\ x>0.
$$
By \eqref{ld-A},
$$
S_1(p)=pT\sum_{k=1}^{\infty} \frac{1}{\lambda_k^{r+(\gamma/\mm)}
+\sqrt{\lambda_k^{2r+2(\gamma/\mm)}+2p\lambda_k^{\gamma/m}}},
$$
whereas \eqref{Weyl} implies
\begin{equation}
\label{S1-f}
\frac{1}{\lambda_k^{r+(\gamma/\mm)}
+\sqrt{\lambda_k^{2r+2(\gamma/\mm)}+2p\lambda_k^{\gamma/m}}}
=\mathfrak{S}^{-r-(\gamma/\mm)}f\big(k+\epsilon(k);p\big),\
\end{equation}
with $\epsilon(k)=O(k^{1-(1/\dd)}),\ k\to \infty$,
uniformly in $p$.

Define
\begin{equation*}
\label{def-Sf}
S_f(p)=pT\mathfrak{S}^{-r-(\gamma/\mm)} \int_1^{\infty} f(x;p)dx.
\end{equation*}
We will now show that
\begin{equation}
\label{eq:same}
\lim_{p\to \infty} \frac{S_1(p)}{S_f(p)}=1.
\end{equation}
To begin, let us establish the asymptotic of
$S_f(p)$. With the notations
\begin{equation*}
\label{inf-OU-notations}
\nu=\frac{\dd}{2(\gamma+2r\mm)}, \ \ R=2p\,\mathfrak{S}^{-2r-(\gamma/\mm)},\
y=x R^{-\nu},
\end{equation*}
\begin{equation}
\label{OU-inf-int-M}
\begin{split}
S_f(p)&=
T\mathfrak{S}^{-\dd/(2\mm)}2^{-(2r\mm+2\gamma-\dd)/(4r\mm+2\gamma)}
p^{(2r\mm+\dd)/(4r\mm+2\gamma)}\\
&
\times
\int_{R^{-\nu}}^{\infty}
\frac{dy}
{y^{2(r\mm+\gamma)/\dd}+\sqrt{y^{4(r\mm+\gamma)/\dd}
+y^{2\gamma/\dd}}}.
\end{split}
\end{equation}

If $\gamma=\dd$, then \eqref{OU-inf-int-M} implies
\begin{equation*}
S_f(p) =
T\mathfrak{S}^{-\dd/(2\mm)}2^{-1/2}p^{1/2}
\int_{R^{-\nu}}^{+\infty}\frac{dy}{y^{2(r\mm+\dd)/\dd}+
\sqrt{y^{4(r\mm+\dd)/\dd}+y^2}}.
\end{equation*}
By L'Hospital's rule, for every $\kappa>0,$
$$
\lim_{\epsilon\to 0} \frac{1}{|\ln \epsilon|}\int_{\epsilon}^{\infty}
\frac{dy}{y^{1+\kappa}+\sqrt{y^{2+2\kappa}+y^2}}=
\lim_{\epsilon\to 0} \frac{\epsilon}{\epsilon^{1+\kappa}+\sqrt{\epsilon^{2+2\kappa}+\epsilon^2}}
=1.
$$
Therefore, as $p\to \infty$,
\begin{equation*}
\label{infOU-log}
S_f(p)\sim
\frac{T\mathfrak{S}^{-\dd/(2\mm)}}{2^{3/2}}\frac{\dd}{\dd+2r\mm}\
p^{1/2}\, \ln p.
\end{equation*}

If $\frac{\dd}{2}-r\mm<\gamma<\dd$, then \eqref{OU-inf-int-M} implies
\begin{equation*}
\label{infOU-power}
\begin{split}
S_f(p)
\sim
T\mathfrak{S}^{-\dd/(2\mm)}2^{-(2r\mm+2\gamma-\dd)/(4r\mm+2\gamma)}
C_{\gamma,\mm}\, p^{(2r\mm+\dd)/(4r\mm+2\gamma)}, \ p\to \infty.
\end{split}
\end{equation*}
In particular,
\begin{equation}
\label{AB-Sf}
S_f(p) =
\begin{cases}
\displaystyle O(\sqrt{p}\, \ln p),& {\rm \ if \ } \gamma=\dd,\\
\displaystyle
O(p^{\tau}),\ \tau=\frac{2r\mm + \dd}{4r\mm+2\gamma}>\frac{1}{2},
& {\rm \ if \ } \
\frac{\dd}{2}-r\mm<\gamma<\dd.
\end{cases}
\end{equation}

To establish \eqref{eq:same}, write
$$
S_1(p)=S_f(p)+pT\mathfrak{S}^{-r-(\gamma/\mm)}
S_{f,1}(p)+pTS_{f,2}(p),
$$
where
\begin{equation*}
\begin{split}
S_{f,1}(p)&=\sum_{k=1}^{\infty} f(k;p)-\int_1^{\infty} f(x;p)dx,\\
S_{f,2}(p)&=\sum_{k=1}^{\infty}
\left(\frac{1}{\lambda_k^{r+(\gamma/\mm)}
+\sqrt{\lambda_k^{2r+2(\gamma/\mm)}+2p\lambda_k^{\gamma/m}}}
 -\mathfrak{S}^{-r-(\gamma/\mm)}f(k;p)\right).
 \end{split}
 \end{equation*}
Then \eqref{eq:same} will follow from
\begin{align}
\label{same1}
pS_{f,1}(p)=o(S_f(p)), \ p\to \infty;\\
\label{same2}
pS_{f,2}(p)=o(S_f(p)), \ p\to \infty.
\end{align}

We have
$$
|S_{f,1}(p)|\leq 2\max_{x\geq 1} f(x;p),
$$
because, for fixed $p$, $0\leq f(x;p)\to 0$, $x\to +\infty$,
 and  the function $f$ has at most one critical point.
If $\gamma\geq 0$, then $\max_{x\geq 1} f(x;p)=f(1;p)=O(p^{-1/2})$,
and \eqref{same1} follows from \eqref{AB-Sf}.

If $\gamma<0$ [which is possible when $2r\mm>\dd$], then
$$
\arg\max_{x\geq 1} f(x;p)=O(p^{\dd/(4r\mm+2\gamma)}),\ p\to \infty,
$$
[by balancing $x^{4(r\mm+\gamma)/\dd}$
 and $px^{2\gamma/\dd}$],
so that, with  $\gamma <\dd$,
$$
p\,\max_{x\geq 1} f(x;p)= O(p^{2r\mm/(4r\mm+2\gamma)})=
o(S_f(p)),\ p\to \infty.
$$

To get a bound on $S_{2,f}$, note that
\begin{equation}
\label{eq:cf}
\left|\frac{\partial f(x;p)}{\partial x}\right| \leq C_f\frac{f(x;p)}{x},
\end{equation}
where  $C_f$ is a suitable constant independent of $p$. Together with
\eqref{S1-f}, inequality \eqref{eq:cf} implies
$$
|S_{f,2}(p)|\leq C_{f,2} \sum_{k=1}^{\infty} f(k;p)k^{-1/\dd},
$$
and the constant $C_{f,2}$ does not depend on $p$. By integral comparison,
$$
\sum_{k=1}^{\infty} f(k;p)k^{-1/\dd} \sim
\int_1^{\infty} f(x;p)x^{-1/\dd}\, dx,\ p\to \infty,
$$
and, similar to the derivation of \eqref{AB-Sf},
 $$
 p\int_1^{\infty} f(x;p)x^{-1/\dd}\, dx = o(S_f(p)),\ p\to \infty,
 $$
which implies \eqref{same2}.

The asymptotic \eqref{S1p-main} of $S_1(p)$ is now
proved; a more compact form of \eqref{S1p-main}
is
\begin{equation}
\label{pr-S1}
S_1(p)=
\begin{cases}
\displaystyle O(\sqrt{p}),& {\rm \ if \ } \gamma>\dd,\\
\displaystyle O(\sqrt{p}\, \ln p),& {\rm \ if \ } \gamma=\dd,\\
\displaystyle
O(p^{\tau}),\ \tau=\frac{2r\mm + \dd}{4r\mm+2\gamma}>\frac{1}{2},
& {\rm \ if \ } \
\frac{\dd}{2}-r\mm<\gamma<\dd.
\end{cases}
\end{equation}
It remains to establish \eqref{domin}. Recall that
$$
2S_2(p)=-\sum_{k=1}^{\infty} \ln \left(\frac{1+B_k}{2}\right);
$$
cf. \eqref{pr-AB}, \eqref{pr-1}, and \eqref{pr-S2-def}.
By definition, $0<B_k<1$, which means
$$
- \ln \left(\frac{1+B_k}{2}\right)=
\ln 2 -\ln (1+B_k) \leq 1-B_k,
$$
because the function $h(x)=1+\ln (1+x) - x $ is decreasing for $x\in [0,1]$
and $h(1)=\ln 2$. Next,
$$
B_k=\big(1+2p\lambda^{-(r+(\gamma/\mm))}\big)^{-1/2},
$$
and therefore
\begin{equation}
\label{Bk}
1-B_k\leq \frac{2p\lambda^{-(r+(\gamma/\mm))}}{1+2p\lambda^{-(r+(\gamma/\mm))}},
\end{equation}
 because
$$
1-(1+x)^{-1/2}\leq \frac{x}{1+x},\ x>0.
$$

Using \eqref{Weyl},
inequality \eqref{Bk} becomes
$
1-B_k\leq g_p(k),
$
where
$$
g_p(x)=\frac{C_g}{1+(x^{\mu}/p)},\ x\geq 0,
$$
$\displaystyle
\mu=\frac{4r\mm+2\gamma}{\dd},
$
and $C_g$ is a suitable constant independent of $p$. Then, by
integral comparison,
$$
2S_2(p)\leq g_p(1)+\int_0^{\infty} g_p(x)dx.
$$
Finally, by direct computation,
$$
g_p(1)+\int_0^{\infty} g_p(x)dx =
O(1)+C_gp^{1/\mu}\int_0^{\infty} \frac{dy}{1+y^{\mu}}=
O(p^{1/\mu})=O(p^{\dd/(4r\mm+2\gamma)}),\ p\to \infty;
$$
note that $\mu>1$ if $\gamma>\frac{\dd}{2}-r\mm$.
Comparing with \eqref{pr-S1}, we see that
$$
S_2(p)=o(S_1(p)), \ p\to \infty,
$$
 for all $\gamma>\frac{\dd}{2}-r\mm$.

To show that $S_3(p)=o(S_1(p)), \ p\to \infty$,
note that  \eqref{pr-S3-def} and
  inequality $\ln (1+x)\leq x$ imply
 $$
 2S_3(p)\leq \sum_{k=1}^{\infty} e^{-2A_k}
 \leq \sum_{k=1}^{\infty} e^{-2\lambda_k^r}
 =O(1), \ p\to \infty.
 $$

{\em This completes the proof of Theorem \ref{th:main}.}
\end{proof}

Comparing \eqref{SB-cylBM-1} and \eqref{SB-heat-1}, we see that,
for $\gamma>\dd$, the small ball behavior of both $W$ and $u$
is the same and, in a certain sense,  similar to the finite-dimensional case \eqref{SB-OU-fd}.
For $\frac{\dd}{2}-r\mm<\gamma<\dd$,
 the small ball rate for $W$ is bigger than the
small ball rate for $u$; in fact, in this range of $\gamma$, the
small ball rate is a decreasing function of $r\mm$.
 At $\gamma=\dd$, a kind of a
phase transition takes place.

For equation \eqref{example-PE}, we have
$$
\dd=r=\mm=\mathfrak{S}=1,\ \lambda_k=k^2,\
\|f\|_{\gamma}^2=\sum_{k=1}^{\infty} k^{2\gamma}f_k^2.
$$
Then
$$
\ln \mathbb{P}\left(\int_0^T  \|W(t)\|_{-\gamma}^2 dt \leq \varepsilon \right)
  \sim
  \begin{cases}
 \displaystyle
  - \frac{T^2}{8}
\left(\sum_{k=1}^{\infty} k^{-\gamma}\right)^2\
\varepsilon^{-1},& {\rm if} \ \gamma>1,\\
 & \\
\displaystyle
-\frac{T^2}{32} \, \varepsilon^{-1}\, |\ln \varepsilon|^2,& {\rm if} \ \gamma=1,\\
& \\
\displaystyle  - \mathfrak{C}_{\gamma}
\varepsilon^{-\varpi},& {\rm if} \ \frac{1}{2}<\gamma<1,
\end{cases}
$$
where
\begin{align*}
\varpi =\frac{1}{2\gamma-1},\
\mathfrak{C}_{\gamma}=(2\gamma-1)^{2\gamma}
\gamma^{-2\gamma\varpi}2^{(1-4\gamma)\varpi}
T^{2\varpi}C_{\gamma}^{2\gamma\varpi},\
{C}_{\gamma}=\int_0^{+\infty}
\frac{\ln \cosh(y)}{y^{1+(1/\gamma)}}\ dy.
\end{align*}
Similarly,
$$
 \ln \mathbb{P}\left(
 \int_0^T  \|u(t)\|_{-\gamma}^2 dt \leq \varepsilon \right)
  \sim
  \begin{cases}
  \displaystyle - \frac{T^2}{8}
\left(\sum_{k=1}^{\infty} k^{-\gamma}\right)^2\
\varepsilon^{-1},& {\rm \ if \ } \gamma>1,\\
& \\
\displaystyle  -\frac{T^2}{288}\,
 \varepsilon^{-1}\,
  |\ln \varepsilon|^2,& {\rm \ if \ } \gamma=1,\\
  & \\
 \displaystyle  -\mathfrak{C}_{\gamma}\, \varepsilon^{-\varpi},
 & {\rm \ if \ } -\frac{1}{2}< \gamma<1,
 \end{cases}
$$
where
\begin{align*}
\notag
&\mathfrak{C}_{\gamma}=\frac{\big((1-\tau)T
C_{\gamma}\big)^{1/(1-\tau)}}{2}\, (\varpi)^{\varpi},\
\notag
&  \tau=\frac{3}{4+2\gamma},\ \ \varpi=
   \frac{\tau}{1-\tau}=\frac{3}{2\gamma+1},\\
&C_{\gamma}= \int_0^{\infty}
\frac{dy}
{y^{2(1+\gamma)}+\sqrt{y^{4(1+\gamma)}
+y^{2\gamma}}}.
\end{align*}
In particular, taking  $\gamma=0$, we get a rather explicit logarithmic
asymptotic
\begin{equation}
\label{0pi-0}
\ln \mathbb{P}\left(
 \int_0^T \int_0^{\pi} u^2(t,x) dx dt \leq \varepsilon \right)
  \sim -\frac{81}{512} \,C_0^4 T^4\, \varepsilon^{-3},
\end{equation}
where
\begin{equation}
\label{C0}
C_0=\int_0^{\infty}\frac{dy}{y^2+\sqrt{y^4+1}} \approx 1.236.
\end{equation}

Figure 1 presents the small ball rates $\varpi$ when
$\gamma<1$ for the solution of \eqref{example-PE}
(bold curve) and for the underlying noise $W$.

\begin{figure}[ht]
  \label{Fig1}
\centering
\includegraphics[width=.95\linewidth]{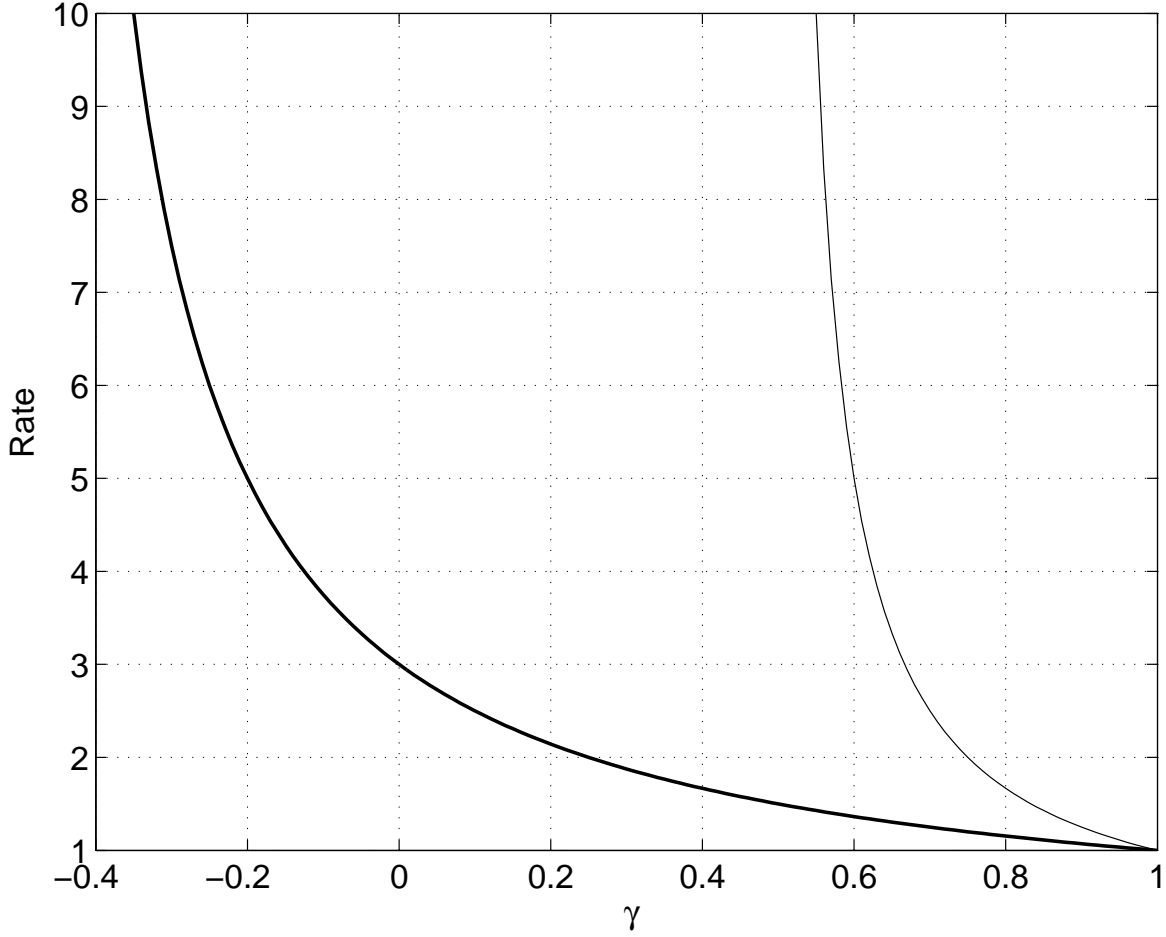}
\caption{Stochastic heat equation on $[0,\pi]$: Small ball
rate  in $H^{-\gamma}$, $\gamma<1$,
for the solution (bold curve) and the noise.}
\end{figure}

\section{Further Directions}

\subsection{Equivalent norms on $H^{\gamma}$}
If $\bld{a}=\{a_k,\ k\geq 1\}$, is a sequence of real numbers  such that
\begin{equation}
\label{eq-ak-basic}
c_1k\leq a_k\leq c_2k
\end{equation}
 for some $0<c_1\leq c_2$ and all $k$, then, by
 \eqref{Weyl},
$$
\|f\|_{\gamma; \bld{a}}^2=\sum_{k=1}^{\infty} a_k^{2\gamma/\dd} f_k^2
$$
defines an equivalent norm on $H^{\gamma}$. When $\gamma>\dd$,
we get immediate analogues of  Theorems \ref{prop:cBM-SB}
 and \ref{th:main}.

\begin{proposition}
Let $X$ denote either $u$ or $W$. For  $\gamma>\dd$,
\begin{equation}
\label{Sb-Na}
\ln \mathbb{P}\left(\int_0^T \|X(t)\|^2_{\gamma; \bld{a}}dt \leq \varepsilon\right)
\sim -\frac{T^2}{8} \left(\sum_{k=1}^{\infty} a_k^{-\gamma/\dd}\right)^2\,
\varepsilon^{-1},\ \varepsilon \to 0.
\end{equation}
\end{proposition}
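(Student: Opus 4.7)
The plan is to reproduce, almost verbatim, the $\gamma>\dd$ portion of the proofs of Theorem~\ref{prop:cBM-SB} and Theorem~\ref{th:main}, with the sole modification that the sequence $\lambda_k^{1/(2\mm)}$ (which by \eqref{Weyl} behaves like $\mathfrak{S}^{1/(2\mm)}k^{1/\dd}$) is replaced throughout by $a_k^{1/\dd}$. The two-sided bound \eqref{eq-ak-basic} delivers $a_k^{-\gamma/\dd}\asymp k^{-\gamma/\dd}$, and this comparison is the only property of the coefficients that is used in that regime. Interpreted concretely, the equivalent norm on $H^{-\gamma}$ is $\|f\|^2_{-\gamma;\bld{a}}=\sum_k a_k^{-2\gamma/\dd}f_k^2$, and the asymptotic in \eqref{Sb-Na} refers to this quantity.

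For $X=W$, independence of the coordinate Brownian motions $w_k$ and the Cameron--Martin formula \eqref{CMF} give, in exact analogy with \eqref{cBM-p},
\begin{equation*}
-\ln\bE\exp\!\left(-p\int_0^T\|W(t)\|^2_{-\gamma;\bld{a}}\,dt\right)
=\frac{1}{2}\sum_{k=1}^\infty\ln\cosh\!\big(T\sqrt{2p}\,a_k^{-\gamma/\dd}\big).
\end{equation*}
For $\gamma>\dd$ the bound $a_k^{-\gamma/\dd}\le c_1^{-\gamma/\dd}k^{-\gamma/\dd}$ makes $\sum_k a_k^{-\gamma/\dd}$ finite, and since $\ln\cosh(y)\le y$, the dominated convergence theorem yields
\begin{equation*}
-\ln\bE\exp\!\left(-p\int_0^T\|W(t)\|^2_{-\gamma;\bld{a}}\,dt\right)
\sim \frac{T}{\sqrt{2}}\left(\sum_{k=1}^\infty a_k^{-\gamma/\dd}\right)\sqrt{p},
\qquad p\to\infty.
\end{equation*}
Theorem~\ref{thm:tauberian} with $\tau=1/2$ and $\alpha=(T/\sqrt{2})\sum_k a_k^{-\gamma/\dd}$ then produces \eqref{Sb-Na}.

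For $X=u$, the same substitution is made in \eqref{pr-AB}--\eqref{pr-S3-def}, so that $A_k=\sqrt{\lambda_k^{2r}+2p\,a_k^{-2\gamma/\dd}}$ and $B_k=\lambda_k^r/A_k$, and the decomposition $-\ln\bE\exp(-p\int_0^T\|u(t)\|^2_{-\gamma;\bld{a}}\,dt)=S_1(p)-S_2(p)+S_3(p)$ remains valid. The bounds $S_2(p)+S_3(p)=o(\sqrt p)$ used in the proof of Theorem~\ref{th:main} depend only on the lower bound $a_k\ge c_1 k$ and transfer without change. For the leading term, \eqref{ld-A} with the modified $A_k$ and the same dominated-convergence argument that produced \eqref{OU-g>d} yield $S_1(p)\sim (T/\sqrt{2})\big(\sum_k a_k^{-\gamma/\dd}\big)\sqrt{p}$, and Theorem~\ref{thm:tauberian} closes the argument.

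There is no serious new obstacle: the task is essentially bookkeeping to verify that every step in the $\gamma>\dd$ portions of the proofs of Theorems~\ref{prop:cBM-SB} and~\ref{th:main} uses the Weyl asymptotic \eqref{Weyl} only through the crude two-sided comparison $\lambda_k^{1/(2\mm)}\asymp k^{1/\dd}$, which \eqref{eq-ak-basic} supplies directly for $a_k^{1/\dd}$. This is visibly the case in that regime, because the finer first-order Weyl correction $O(k^{-1/\dd})$ is invoked only in the integral-comparison step \eqref{eq:same} arising in the regimes $\gamma\le\dd$. This is precisely why the proposition is confined to $\gamma>\dd$: in the other regimes \eqref{eq-ak-basic} is too weak, and a more refined description of $\{a_k\}$ would be needed.
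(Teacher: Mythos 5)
Your proof is correct and follows essentially the same route the paper takes—the paper's own proof is a one‑line pointer to \eqref{cBM-p} and \eqref{MainAsymptotic}, and you simply fill in the bookkeeping showing that replacing $\lambda_k^{-\gamma/(2\mm)}$ by $a_k^{-\gamma/\dd}$ leaves every step of the $\gamma>\dd$ argument intact. (You also implicitly correct a typographical slip in the paper's proof, where the coefficient $2^{-3/2}$ in the Laplace‑transform asymptotic should read $2^{-1/2}=1/\sqrt{2}$ in order to produce the stated small ball constant $T^2\big(\sum_k a_k^{-\gamma/\dd}\big)^2/8$ via Theorem~\ref{thm:tauberian}.)
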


\begin{proof}
Direct computations show that
$$
\ln \bE\exp\left(-p \int_0^T \|X(t)\|^2_{\gamma; \bld{a}}dt\right)
\sim -2^{-3/2} T\left(\sum_{k=1}^{\infty}
 a_k^{-\gamma/\dd}\right)\,\sqrt{p}, \ \  \ p\to \infty,
$$
either by \eqref{MainAsymptotic} [when $X=u$] or
by \eqref{cBM-p} [when $X=W$],
 and then \eqref{Sb-Na} follows from Theorem \ref{thm:tauberian}.
 \end{proof}

 When $\gamma\leq \dd$, analogues of Theorems \ref{prop:cBM-SB}
 and \ref{th:main} exist under an additional assumption about the
 numbers $a_k$.

\begin{proposition}
\label{prop:eqn}
Assume that
\begin{equation}
\label{ak-lim}
\lim_{k\to \infty} \frac{a_k}{k}=c_a>0.
\end{equation}
Then, as $\varepsilon \to 0$,
\begin{equation}
\label{cBM-SB-ak}
 \ln \mathbb{P}\left(\int_0^T  \|W(t)\|_{-\gamma;\bld{a}}^2 dt \leq \varepsilon \right)
  \sim
  \begin{cases}
\displaystyle
-\frac{T^2}{32c_a^2} \, \varepsilon^{-1}\, |\ln \varepsilon|^2,& {\rm if} \ \gamma=\dd,\\
& \\
\displaystyle  - \mathfrak{C}_{\gamma}
\varepsilon^{-\varpi},& {\rm if} \ \frac{\dd}{2}<\gamma<\dd,
\end{cases}
\end{equation}
where
\begin{align*}
\varpi &=\frac{\dd}{2\gamma-\dd},\
\mathfrak{C}_{\gamma}=(2q-1)^{2q}q^{-2q\varpi}2^{(1-4q)\varpi}
(Tc_a^{-\gamma/\dd})^{2\varpi}C_{\gamma}^{2q\varpi},\\
q&=\frac{\gamma}{\dd},\ {C}_{\gamma}=\int_0^{+\infty}
\frac{\ln \cosh(y)}{y^{1+(\dd/\gamma)}}\ dy.
\end{align*}

For the solution $u=u(t)$ of equation \eqref{PE},
\begin{equation}
\label{SB-heat-1-ak}
 \ln \mathbb{P}\left(
 \int_0^T  \|u(t)\|_{-\gamma;\bld{a}}^2 dt \leq \varepsilon \right)
  \sim
  \begin{cases}
\displaystyle  -\frac{T^2}{32c_a^2}\,
  \left(\frac{\dd}{\dd+2r\mm}\right)^2 \, \varepsilon^{-1}\,
  |\ln \varepsilon|^2,& {\rm \ if \ } \gamma=\dd,\\
  & \\
 \displaystyle  -\mathfrak{C}_{\gamma,\mm}\, \varepsilon^{-\varpi},
 & {\rm \ if \ } \frac{\dd}{2}-r\mm< \gamma<\dd,
 \end{cases}
\end{equation}
where
\begin{align*}
&\mathfrak{C}_{\gamma,\mm}=\frac{\big((1-\tau)\mathfrak{S}^{r(\gamma-\dd)/(\gamma+2r \mm)}
c_a^{-2\gamma\tau/\dd}T
C_{\gamma,\mm}\big)^{1/(1-\tau)}}{2}\, (\varpi)^{\varpi},\\
&  \tau=\frac{2r\mm+\dd}{4r\mm+2\gamma},\ \ \varpi=
   \frac{\tau}{1-\tau}=\frac{2r\mm +\dd}{2\gamma+2r\mm-\dd},\\
&C_{\gamma,\mm}= \int_0^{\infty}
\frac{dy}
{y^{2(r\mm+\gamma)/\dd}+\sqrt{y^{4(r\mm+\gamma)/\dd}
+y^{2\gamma/\dd}}}.
\end{align*}

\end{proposition}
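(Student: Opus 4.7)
The plan is to trace through the proofs of Theorems \ref{prop:cBM-SB} and \ref{th:main}, replacing the norm weights $\lambda_k^{-\gamma/\mm}$ by $a_k^{-2\gamma/\dd}$. Under hypothesis (\ref{ak-lim}) one has
$$
a_k^{-2\gamma/\dd}\sim c_a^{-2\gamma/\dd}\,k^{-2\gamma/\dd},\qquad k\to \infty,
$$
which matches the rate $\lambda_k^{-\gamma/\mm}\sim \mathfrak{S}^{-\gamma/\mm}k^{-2\gamma/\dd}$ supplied by (\ref{Weyl}); the only effect on the asymptotic constants is to substitute $c_a^{-2\gamma/\dd}$ for $\mathfrak{S}^{-\gamma/\mm}$ throughout the analysis.

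For (\ref{cBM-SB-ak}), applying Cameron-Martin componentwise as in (\ref{cBM-p}) gives
$$
\ln \bE \exp\!\Bigl(-p\!\int_0^T \|W(t)\|_{-\gamma;\bld{a}}^2\,dt\Bigr) = -\tfrac{1}{2}\sum_{k\ge 1} \ln\cosh\!\bigl(T\sqrt{2p\,a_k^{-2\gamma/\dd}}\bigr),
$$
after which the integral-comparison argument of Theorem \ref{prop:cBM-SB} carries over with $A_\gamma=\sqrt{2}\,T\,\mathfrak{S}^{-\gamma/(2\mm)}$ replaced by $\sqrt{2}\,T\,c_a^{-\gamma/\dd}$. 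Theorems \ref{thm:tauberian1} (for $\gamma=\dd$) and \ref{thm:tauberian} (for $\dd/2<\gamma<\dd$) then produce (\ref{cBM-SB-ak}).

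For (\ref{SB-heat-1-ak}), I would set
$$
\widetilde A_k = \sqrt{\lambda_k^{2r}+2p\,a_k^{-2\gamma/\dd}},\qquad \widetilde B_k = \lambda_k^r/\widetilde A_k,
$$
and use (\ref{OU-Psi-IC}) together with (\ref{PE-OUk}) to decompose $-\ln \bE\exp(-p\!\int_0^T\|u\|_{-\gamma;\bld{a}}^2dt) = \widetilde S_1(p)-\widetilde S_2(p)+\widetilde S_3(p)$ in the style of (\ref{pr-1})--(\ref{pr-S3-def}). The leading term satisfies
$$
\widetilde S_1(p) = pT\sum_{k\ge 1}\frac{1}{\lambda_k^r a_k^{2\gamma/\dd} + \sqrt{\lambda_k^{2r}a_k^{4\gamma/\dd}+2p\,a_k^{2\gamma/\dd}}},
$$
and, after factoring out $\mathfrak{S}^{-r}c_a^{-2\gamma/\dd}$ from each summand, is compared with the integral $pT\mathfrak{S}^{-r}c_a^{-2\gamma/\dd}\int_1^\infty \widetilde f(x;p)\,dx$ for
$$
\widetilde f(x;p) = \frac{1}{x^{2(r\mm+\gamma)/\dd}+\sqrt{x^{4(r\mm+\gamma)/\dd}+2p\,\mathfrak{S}^{-2r}c_a^{-2\gamma/\dd}\,x^{2\gamma/\dd}}}.
$$
The substitution $y=x\widetilde R^{-\nu}$, $\widetilde R = 2p\,\mathfrak{S}^{-2r}c_a^{-2\gamma/\dd}$, $\nu=\dd/(2(\gamma+2r\mm))$, is formally identical to the one used in the proof of Theorem \ref{th:main}. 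Collecting powers of $\mathfrak{S}$, $c_a$ and $2$ produces the prefactor $T\cdot 2^{-(2r\mm+2\gamma-\dd)/(4r\mm+2\gamma)}\mathfrak{S}^{r(\gamma-\dd)/(\gamma+2r\mm)}c_a^{-2\gamma\tau/\dd}C_{\gamma,\mm}$ of $p^\tau$, which upon feeding into Theorem \ref{thm:tauberian} yields the stated $\mathfrak{C}_{\gamma,\mm}$ (the identity $r(1-2\tau)=r(\gamma-\dd)/(\gamma+2r\mm)$ is a direct exponent check, and the special case $a_k=\lambda_k^{\dd/(2\mm)}$, $c_a=\mathfrak{S}^{\dd/(2\mm)}$, recovers the formula of Theorem \ref{th:main}). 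The lower-order bounds $\widetilde S_2(p)=o(\widetilde S_1(p))$ and $\widetilde S_3(p)=o(\widetilde S_1(p))$ run exactly as in (\ref{domin}) since the arguments there used only the inequalities $1-(1+x)^{-1/2}\le x/(1+x)$ and $\ln(1+x)\le x$ together with the polynomial rate of the weights.

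The one place where care is needed is the summation-vs-integration step for the analogue of $S_{f,2}(p)$. The proof of Theorem \ref{th:main} exploited the explicit Weyl remainder $O(k^{-1/\dd})$, whereas (\ref{ak-lim}) is only a qualitative statement. I would handle this by splitting the sum at a cutoff $k_0(p)\to \infty$: for $k\ge k_0(p)$ the residual $a_k^{-2\gamma/\dd}/(c_a^{-2\gamma/\dd}k^{-2\gamma/\dd})-1$ is uniformly small by (\ref{ak-lim}), while for $k<k_0(p)$ the contribution is bounded crudely using (\ref{eq-ak-basic}) (automatic from (\ref{ak-lim}) and positivity of $a_k$) and chosen so that it is $o(\widetilde S_f(p))$ since $\widetilde S_f(p)$ grows at least like $\sqrt p\,\ln p$ across the parameter range.
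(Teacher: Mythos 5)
Your proposal is correct and follows the same route as the paper's (very terse) proof, which just says ``replace $\mathfrak{S}$ with $c_a^{2\mm/\dd}$'' for $W$ and invokes ``a slightly more detailed analysis'' for $u$. You have actually done that analysis: the exponent bookkeeping you carry out for $\mathfrak{S}^{r(\gamma-\dd)/(\gamma+2r\mm)}c_a^{-2\gamma\tau/\dd}$ is exactly what ``more detailed analysis'' hides, and the consistency check $c_a=\mathfrak{S}^{\dd/(2\mm)}$ correctly recovers \eqref{SB-heat-1}. The one genuinely nontrivial point you flag --- that the proof of Theorem~\ref{th:main} uses the quantitative Weyl remainder $O(k^{-1/\dd})$ in the summation-vs-integration step \eqref{S1-f}--\eqref{eq:cf} while \eqref{ak-lim} supplies only a rate-free limit --- is real and is not addressed by the paper. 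Your cutoff-at-$k_0(p)$ fix is the right idea and does close the gap: for $k\ge k_0(p)$ uniform smallness of the relative error, for $k<k_0(p)$ the crude bound $\max_x f(x;p)=O(p^{-1/2})$ (or $O(p^{-2r\mm/(4r\mm+2\gamma)})$ when $\gamma<0$) together with $\widetilde S_f(p)\gtrsim p^{\tau}$, $\tau>1/2$, making the head negligible. So the proof is sound; you have simply spelled out a step the paper leaves implicit.
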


\begin{proof}
By \eqref{Weyl} and \eqref{ak-lim},
$$
\lambda_k^{\gamma/\mm}\sim \mathfrak{S}^{\gamma/\mm}k^{2\gamma/\dd}
\sim c_a^{2\gamma/\dd}k^{2\gamma/\dd},\ k\to \infty.
$$
To derive \eqref{cBM-SB-ak}, it remains to replace $\mathfrak{S}$ in \eqref{SB-cylBM-1}
with $c_a^{2\mm/\dd}$. After a slightly more detailed analysis,
\eqref{SB-heat-1-ak} follows from \eqref{SB-heat-1} in a similar way.
Note that if $c_a=\mathfrak{S}^{\dd/(2\mm)}$, then
 \eqref{SB-heat-1-ak}  becomes \eqref{SB-heat-1}.
\end{proof}

In the special case $a_k=k$,
an alternative proof of Proposition \ref{prop:eqn} is possible using
the results from \cite[Example 2]{Li-JTP92}; for technical reasons,
such a proof is usually not possible under the general assumption
\eqref{ak-lim}.  Without \eqref{ak-lim} (that is, assuming only
\eqref{eq-ak-basic}), a precise logarithmic asymptotic of the small ball probabilities
 may not exist when $\gamma\leq \dd$, but the corresponding
 upper and lower bounds can still be derived.

\subsection{Noise with correlation in space}
A generalization of $\bld{W}$ is $\opr{Q}$-cylindrical
Brownian motion $\bld{W}^{\opr{Q}}$ defined by
$$
\bE \Big(\bld{W}^{\opr{Q}}(t,f)\bld{W}^{\opr{Q}}(s,g)\Big)
=\min(t,s) \int_G (\opr{Q}f)(x)g(x)dx,
$$
where $\opr{Q}$ is a non-negative symmetric operator on
$L_2(G)$. If
\begin{equation*}
\label{oprQ}
\opr{Q}\varphi_k=q_k^2\varphi_k,\ \ q_k>0,
\end{equation*}
then, similar to \eqref{cbm-sum}, we set
$$
W^{\opr{Q}}(t)=\sum_{k=1}^{\infty}
 \varphi_k \bld{W}^{\opr{Q}}(t,\varphi_k)
$$
and consider the equation
$$
\dot{u}(t)=\opr{A}^r u+\dot{W}^{\opr{Q}}(t),\ u(0)=0.
$$
If furthermore
\begin{equation*}
\label{qk}
q_k\sim c_qk^{\sigma},\ k\to \infty,
\end{equation*}
then the question about the asymptotic of
$\mathbb{P}\left(\int_0^T\|u(t)\|^2_{-\gamma}dt\leq \varepsilon\right), $
$\varepsilon\to 0$, is reduced to the corresponding question for the
solution of the equation
$$
\dot{v}(t)=\opr{A}^r v(t) +\dot{W}(t),
$$
where $v=\opr{Q}^{-1/2}u$. For example, if $q_k=\lambda_k^s$,
that is, $\opr{Q}=\opr{A}^{2s}$, $s\in \bR$,  then
$$
\mathbb{P}\left(\int_0^T\|u(t)\|^2_{-\gamma}dt\leq \varepsilon\right)
=
\mathbb{P}\left(\int_0^T\|v(t)\|^2_{-\gamma+2\mm s}dt\leq \varepsilon\right).
$$

\subsection{Non-zero initial condition}
Assume that the initial condition
$u(0)$ in \eqref{PE} is independent of
$W$ and has the form
$$
u(0)=\sum_{k=1}^{\infty} u_k(0)\varphi_k,
$$
where $u_k(0)$ are independent Gaussian random variables
with mean $\mu_k$ and variance $\sigma_k^2$. To ensure
\eqref{PE-reg1}, condition
\begin{equation}
\label{IC-reg}
\sum_{k=1}^{\infty}k^{-2\gamma/\dd}\big( \mu_k^2+\sigma_k^2\big) <\infty
\end{equation}
must hold for all $\gamma >\dd/2$.

In the finite-dimensional case \eqref{OU-fd}, it is known
\cite[Theorem 4.5]{SVL-MM} that the initial condition
may affect the small ball constant but not the small ball rate:
if $\bld{x}(0)$ is a Gaussian
random vector independent of $\bld{w}$, then
$$
\ln \mathbb{P}\left(\int_0^T |\bld{x}(t)|^2dt\leq \varepsilon\right)
\sim - \mathfrak{C}\, \varepsilon^{-1}, \ \varepsilon \to 0,
$$
where $\mathfrak{C}$ may depend on the mean and covariance of
$\bld{x}(0)$. In particular, if the covariance matrix of $\bld{x}(0)$ is non-singular,
then $\mathfrak{C}={T^2}/{8}$, that is, the initial condition does not
change the small ball asymptotic at the logarithmic level. The corresponding
results in the infinite-dimensional case are as follows.

 \begin{proposition}
 \label{prop:inf-dim-IC}
 Assume that  \eqref{IC-reg} holds.
\begin{enumerate}
\item If
 \begin{equation}
 \label{IC-ndgv}
 \inf_{k}\sigma_k=\sigma_0>0,
 \end{equation}
  then \eqref{SB-heat-1} holds.
  \item If  $\gamma>\dd$ and $\sigma_k=0$ for all $k\geq 1$,
 then
  \begin{equation}
  \label{IC-zeroSG}
\ln  \mathbb{P}\left(\int_0^T\|u(t)\|^2_{-\gamma}dt\leq \varepsilon\right)
 \sim -
  \frac{1}{8}
\left(\sum_{k=1}^{\infty}
\big(T+ \mu_k^2)\lambda_k^{-\gamma/(2\mm)}\right)^2
\, \varepsilon^{-1},\ \varepsilon\to 0.
  \end{equation}
  \end{enumerate}
 \end{proposition}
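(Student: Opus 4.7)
The plan is to repeat the Laplace-transform analysis of Theorem \ref{th:main}, this time tracking the contribution of the Gaussian initial condition through the explicit formula \eqref{OU-Psi-IC}. Projecting \eqref{PE} onto $\{\varphi_k\}$ reduces the problem to independent scalar OU equations \eqref{PE-OUk} with $u_k(0)\sim N(\mu_k,\sigma_k^2)$ independent of $w_k$. Applying \eqref{OU-Psi-IC} in each mode (with $a=\lambda_k^r$, $\sigma=1$, and $p$ replaced by $p\lambda_k^{-\gamma/\mm}$) and multiplying over $k$ yields
\begin{equation*}
-\ln\bE\exp\!\left(-p\int_0^T\|u(t)\|_{-\gamma}^2\,dt\right)
=S_1(p)-S_2(p)+S_3(p)+S_4(p)+S_5(p),
\end{equation*}
where $S_1,S_2,S_3$ are the sums \eqref{pr-S1-def}--\eqref{pr-S3-def} already controlled in the proof of Theorem \ref{th:main}, and the two new initial-condition contributions are
\begin{equation*}
S_4(p)=\tfrac{1}{2}\sum_{k\geq 1}\ln(1+2\sigma_k^2\psi_k),
\qquad
S_5(p)=\sum_{k\geq 1}\frac{\psi_k\mu_k^2}{1+2\sigma_k^2\psi_k},
\end{equation*}
with $\psi_k$ obtained from the $\psi$ of \eqref{OU-Psi-IC} by the same substitution.

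For part (2), $\sigma_k\equiv 0$ kills $S_4$ and reduces $S_5$ to $\sum_k\psi_k\mu_k^2$. The uniform bound $\psi_k\leq\sqrt{p/2}\,\lambda_k^{-\gamma/(2\mm)}$ (immediate from $\psi_k\leq p\lambda_k^{-\gamma/\mm}/(A_k+\lambda_k^r)$ and $A_k\geq\sqrt{2p}\,\lambda_k^{-\gamma/(2\mm)}$), together with the pointwise limit $\psi_k\sim\sqrt{p/2}\,\lambda_k^{-\gamma/(2\mm)}$, legitimises dominated convergence whenever $\sum_k\mu_k^2\lambda_k^{-\gamma/(2\mm)}<\infty$; this holds for $\gamma>\dd$ because \eqref{IC-reg}, assumed for all $\gamma>\dd/2$, forces $\mu_k^2$ to grow subpolynomially. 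Hence $S_5(p)\sim\sqrt{p/2}\sum_k\mu_k^2\lambda_k^{-\gamma/(2\mm)}$, and adding the $\gamma>\dd$ asymptotic of $S_1(p)$ from \eqref{S1p-main} gives
\begin{equation*}
-\ln\bE\exp\!\left(-p\int_0^T\|u(t)\|_{-\gamma}^2\,dt\right)
\sim\sqrt{p/2}\sum_{k\geq 1}(T+\mu_k^2)\lambda_k^{-\gamma/(2\mm)};
\end{equation*}
Theorem \ref{thm:tauberian} with $\tau=1/2$ then converts this to \eqref{IC-zeroSG}.

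For part (1), the goal is to show $S_4(p)+S_5(p)=o(S_1(p))$ so that \eqref{SB-heat-1} is preserved. Under \eqref{IC-ndgv}, the summand of $S_5$ is increasing in $\psi_k$ with supremum $\mu_k^2/(2\sigma_k^2)\leq\mu_k^2/(2\sigma_0^2)$, so monotone convergence furnishes a $p$-independent envelope. For $S_4$ I split at the crossover $k_\ast=p^{\dd/(4r\mm+2\gamma)}$, where $A_k$ transitions from the $\sqrt{2p}\,\lambda_k^{-\gamma/(2\mm)}$-regime to the $\lambda_k^r$-regime: for $k\leq k_\ast$ each summand is $O(\ln p)$, producing a head contribution of order $k_\ast\ln p$; for $k>k_\ast$ the inequalities $\ln(1+x)\leq x$ and $\psi_k\leq(p/2)\lambda_k^{-r-\gamma/\mm}$, combined with the vanishing tail $\sum_{k>k_\ast}\sigma_k^2\lambda_k^{-\gamma/\mm}\to 0$ enforced by \eqref{IC-reg}, produce a strictly lower-order tail. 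The algebraic identity $\dd(4r\mm+2\gamma)-(2r\mm+\dd)(2r\mm+2\gamma)=2r\mm(\dd-2r\mm-2\gamma)<0$ (valid under $\gamma>\dd/2-r\mm$) then confirms that the head term is $o(p^\tau)$ in the power regime, $o(\sqrt p\,\ln p)$ at $\gamma=\dd$, and $o(\sqrt p)$ for $\gamma>\dd$, so that $S_4(p)+S_5(p)=o(S_1(p))$ in every regime of \eqref{SB-heat-1}.

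The main obstacle is the $\gamma>\dd$ case of part (1): here $S_1(p)\sim\sqrt p$, and even the uniform bound $S_5(p)\leq\sqrt{p/2}\sum_k\mu_k^2\lambda_k^{-\gamma/(2\mm)}$ is only $O(\sqrt p)$, matching the order of $S_1$ rather than being strictly smaller. The resolution is the monotone-convergence envelope $\mu_k^2/(2\sigma_k^2)\leq\mu_k^2/(2\sigma_0^2)$ supplied by \eqref{IC-ndgv}, which collapses $S_5(p)$ to $O(1)$ once the envelope series converges (the natural setting in which the proposition is meant to be applied). A parallel tail-vanishing sharpening—rather than the crude envelope—is needed to upgrade the $O$-bound on $S_4$ to a strict $o$ in the same regime, and this delicate interplay between \eqref{IC-reg} and the crossover algebra above is the technical heart of the argument.
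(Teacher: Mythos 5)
Your decomposition is exactly the paper's: projecting onto modes, applying \eqref{OU-Psi-IC}, and tracking the two extra factors from the initial condition (your $S_5$ and $S_4$ are the paper's $S_{0,1}$ and $S_{0,2}$). Part~(2) is handled essentially as in the paper: $\sigma_k\equiv 0$ reduces everything to $S_{0,1}(p)=\sum_k\mu_k^2\psi_k$, the domination $\psi_k\leq\sqrt{p/2}\,\lambda_k^{-\gamma/(2\mm)}$ is correct, $\sum_k\mu_k^2\lambda_k^{-\gamma/(2\mm)}<\infty$ follows from \eqref{IC-reg} with $s=\gamma/\dd>1$, and dominated convergence plus Theorem~\ref{thm:tauberian} closes the argument.

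The gap is in part~(1). You observe that under \eqref{IC-ndgv} each summand of $S_5$ is bounded by $\mu_k^2/(2\sigma_0^2)$, and conclude $S_5(p)=O(1)$ ``once the envelope series converges.'' But \eqref{IC-reg} does \emph{not} imply $\sum_k\mu_k^2<\infty$: the hypothesis $\sum_k k^{-2\gamma/\dd}(\mu_k^2+\sigma_k^2)<\infty$ for all $\gamma>\dd/2$, i.e.\ $\sum_k k^{-s}\mu_k^2<\infty$ for all $s>1$, is satisfied by $\mu_k\equiv 1$, for which $\sum_k\mu_k^2=\infty$. So your claimed $O(1)$ collapse of $S_5$ is not available under the stated hypotheses, and the ``natural setting'' escape hatch amounts to adding an unstated assumption. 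The same issue bleeds into your bound on $S_4$: a crude envelope by $\sigma_k^2\psi_k$ needs $\sum_k\sigma_k^2\lambda_k^{-\gamma/(2\mm)}<\infty$, which fails for $\gamma\leq\dd$ when $\sigma_k$ is merely bounded.

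What the paper (tersely) uses is the sharper mode-by-mode bound
\[
S_{0,1}(p)\leq\sum_{k\geq 1}\frac{p\,\mu_k^2\,\lambda_k^{-\gamma/\mm}}{\lambda_k^r+p\,\sigma_0^2\,\lambda_k^{-\gamma/\mm}},
\]
which retains the $k$-dependence in the cutoff rather than flattening it to a $p$-independent envelope. A crossover split at $\lambda_{k_*}^{\,r+\gamma/\mm}\asymp p\sigma_0^2$, combined with the fact that \eqref{IC-reg} forces $\mu_k^2=O(k^{\epsilon})$ for every $\epsilon>0$, shows that this sum is $O(p^{\dd/(2(r\mm+\gamma))+\epsilon})$, and one checks $\dd/(2(r\mm+\gamma))<\tau=(2r\mm+\dd)/(4r\mm+2\gamma)$ whenever $\gamma>\dd/2-r\mm$, so $S_{0,1}(p)=o(S_1(p))$ in every regime, with an analogous (and slightly easier, since $\ln(1+x)\leq x$) argument for $S_{0,2}$. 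Your crossover algebra for $S_4$ points in this direction, but you need to apply the same idea to $S_5$ instead of the $O(1)$ envelope, and you need it to work under \eqref{IC-reg} alone rather than under the stronger summability of $\mu_k^2$.
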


 \begin{proof}  The idea is to trace the contributions of the
 initial condition throughout the proof of
 Theorem \ref{th:main}. In particular, our objective is the asymptotic of
  $\ln \bE \exp\left(-p\int_0^T\|u(t)\|_{-\gamma}^2dt\right)$
  as $p\to \infty$.

 By \eqref{OU-Psi-IC},  the initial condition contributes
an extra multiplicative term  
$$
\exp\left(-S_{0,1}(p)-S_{0,2}(p)\right),
$$
 where
\begin{align*}
 S_{0,1}(p)&=\sum_{k=1}^{\infty}
 \frac{\mu_k^2\psi_{0,k}(p)}{1+2\sigma_k^2\psi_{0,k}(p)},\ \
 2S_{0,2}(p)=\sum_{k=1}^{\infty}
  \ln \big(1+2\sigma_k^2\psi_{0,k}(p)\big),\\
 \psi_{0,k}(p)&
 =\frac{p\lambda_k^{-\gamma/\mm}}
 {\lambda_k^r+\sqrt{\lambda_k^{2r}+2p\lambda_k^{-\gamma/\mm}}}.
 \end{align*}
 Also recall that, with zero initial condition, the dominant term is
 $$
 S_1(p)=pT\sum_{k=1}^{\infty}
  \frac{\lambda_k^{-\gamma/\mm}}{\lambda_k^r+
  \sqrt{\lambda_k^{2r}+2p\lambda_k^{-\gamma/\mm}}}.
  $$

 Under \eqref{IC-ndgv},
 $$
 S_{0,1}(p)\leq \sum_{k=1}^{\infty}
 \frac{p\mu_k^2\lambda_k^{-\gamma/\mm}}
 {\lambda_k^r+p\sigma_0^2\lambda_k^{-\gamma/\mm}}=o(S_1(p)), \
 p\to \infty.
 $$
 Similarly, $S_{0,2}(p)
 =o(S_1(p)), \  p\to \infty. $ In other words, if the variance of the initial
 condition is strictly non-degenerate, then the initial condition does not
 affect the asymptotic of
 $\ln \bE \exp\left(-p\int_0^T\|u(t)\|_{-\gamma}^2dt\right)$
 as $p\to \infty$.

 If $\gamma>\dd$ and $\sigma_k=0$, then
\eqref{IC-reg} implies
$$
\sum_{k=1}^{\infty} \lambda_k^{-\gamma/(2\mm)}\mu_k^2<\infty,
$$
and, similar to the proof of \eqref{MainAsymptotic},
$$
\ln \bE \exp\left(-p\int_0^T\|u(t)\|_{-\gamma}^2dt\right)
\sim -\frac{1}{\sqrt{2}}
\sum_{k=1}^{\infty}
\big(T+ \mu_k^2)\lambda_k^{-\gamma/(2\mm)}.
$$
After that, Theorem \ref{thm:tauberian} implies
\eqref{IC-zeroSG}.
 \end{proof}

 The stationary case requires special consideration; cf. \cite{fatalov08} for one-dimensional OU process.
 \begin{proposition}
 Assume that $\mu_k=0$ and
 $\sigma_k^2=(2\lambda_k^r)^{-1}$. Then
\eqref{SB-heat-1} holds.
\end{proposition}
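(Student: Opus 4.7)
The plan is to follow the scheme of the proof of Proposition~\ref{prop:inf-dim-IC}, tracking how the stationary initial condition modifies the mode-wise Laplace transform. Applying \eqref{OU-Psi-IC} mode by mode (with $a=\lambda_k^r$, $\sigma=1$, $\mu_0=0$, $\sigma_0^2=(2\lambda_k^r)^{-1}$, and $p$ replaced by $p\lambda_k^{-\gamma/\mm}$) introduces an extra factor $(1+2\sigma_k^2\psi_k(p))^{-1/2}$ per mode beyond what already appears in \eqref{pr-1}; because $\mu_k=0$, the $S_{0,1}$ term from the proof of Proposition~\ref{prop:inf-dim-IC} vanishes identically. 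Thus it suffices to show
$$
S_{0,2}(p):=\frac{1}{2}\sum_{k=1}^{\infty}\ln(1+2\sigma_k^2\psi_k(p))=o(S_1(p)),\qquad p\to\infty,
$$
since then \eqref{MainAsymptotic} is preserved and \eqref{SB-heat-1} follows from Theorems~\ref{thm:tauberian} and~\ref{thm:tauberian1} exactly as in the proof of Theorem~\ref{th:main}.

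The key explicit computation, replacing $\psi_k(p)$ by the asymptotically equivalent quantity $\psi_{0,k}(p)$ from the proof of Proposition~\ref{prop:inf-dim-IC} (the two differ by a factor $1+o(1)$ uniformly in the relevant summation range, since $A_k T\to\infty$ wherever the summand is not already negligible), is that with $\sigma_k^2=1/(2\lambda_k^r)$,
$$
1+2\sigma_k^2\psi_{0,k}(p)=\frac{\lambda_k^r+A_k}{2\lambda_k^r}=\frac{1+\sqrt{1+\eta_k(p)}}{2},\qquad \eta_k(p):=2p\lambda_k^{-2r-\gamma/\mm}.
$$
I would then split $2S_{0,2}(p)=\sum_k\ln[(1+\sqrt{1+\eta_k(p)})/2]$ at the crossover index $K(p):=p^{\dd/(4r\mm+2\gamma)}$ at which $\eta_k(p)\sim 1$ (by Weyl's asymptotic \eqref{Weyl}). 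For $k\le K(p)$ one has $\eta_k(p)\gtrsim 1$, so each summand equals $\frac{1}{2}\ln p-\frac{2r\mm+\gamma}{\dd}\ln k+O(1)$; summing and using $\sum_{k\le K}\ln k=K\ln K-K+O(\ln K)$, the two dominant $K(p)\ln p$ contributions cancel and only $O(K(p))$ survives. For $k>K(p)$, the Taylor bound $\ln[(1+\sqrt{1+\eta})/2]=O(\eta^2)$ together with the convergent tail $\sum_{k>K(p)}k^{-2(4r\mm+2\gamma)/\dd}$ (convergent because $(4r\mm+2\gamma)/\dd>1$ throughout the range $\gamma>\dd/2-r\mm$) also yields $O(K(p))$.

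Hence $S_{0,2}(p)=O(p^{\dd/(4r\mm+2\gamma)})$. Comparing with the bounds \eqref{pr-S1} on $S_1(p)$: the exponent $\dd/(4r\mm+2\gamma)$ is strictly less than $1/2$ whenever $\gamma>\dd/2-r\mm$, which covers the first two regimes $\gamma\ge\dd$; and it is strictly less than $\tau=(2r\mm+\dd)/(4r\mm+2\gamma)$ in the third regime $\dd/2-r\mm<\gamma<\dd$, simply because $\dd<2r\mm+\dd$. Consequently $S_{0,2}(p)=o(S_1(p))$ in every case, and \eqref{SB-heat-1} is unchanged. The principal obstacle is the delicate cancellation in the $k\le K(p)$ regime: the two leading-order $K(p)\ln p$ contributions must cancel exactly via Weyl's law before the genuine $O(K(p))$ bound becomes visible, so the sum must be handled termwise rather than bounded in absolute value.
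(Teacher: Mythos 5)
Your proof follows the same route as the paper's (which merely asserts that with $\mu_k=0$ only the $S_{0,2}$ term remains and $S_{0,2}(p)=o(S_1(p))$ still holds, without supplying the estimate), and your explicit bound $S_{0,2}(p)=O(p^{\dd/(4r\mm+2\gamma)})$, obtained from the identity $1+2\sigma_k^2\psi_{0,k}(p)=\tfrac12\big(1+\sqrt{1+\eta_k(p)}\big)$, the split at $K(p)=p^{\dd/(4r\mm+2\gamma)}$, and the cancellation of the two $K\ln p$ terms via Weyl's law, is correct and usefully fills in the detail the paper omits. Two slips that happen not to affect the outcome: for small $\eta$, $\ln\big[\tfrac12(1+\sqrt{1+\eta})\big]=\tfrac{\eta}{4}+O(\eta^2)$, so the tail estimate should use the linear bound $O(\eta_k)$ and the series $\sum_{k>K}k^{-(4r\mm+2\gamma)/\dd}$ rather than $O(\eta_k^2)$ (the scaling identity $p^\alpha K^{1-\alpha(4r\mm+2\gamma)/\dd}=K$ for any $\alpha>0$ makes both give $O(K(p))$, but the stated Taylor order is wrong); and $\dd/(4r\mm+2\gamma)<1/2$ is equivalent to $\gamma>\dd-2r\mm$, not to $\gamma>\dd/2-r\mm$, though it certainly holds in the range $\gamma\ge\dd$ where you invoke it.
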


\begin{proof}
Even though direct application of Proposition \ref{prop:inf-dim-IC}(1)
is not possible because now  $\inf_{k}\sigma_k=0$
and \eqref{IC-ndgv} fails, very little changes in the actual proof:
 with only the $S_{0,2}(p)$ term present, we see that
$S_{0,2}(p)=o(S_1(p)), \  p\to \infty, $ still holds, that is, the initial condition
does not
affect the asymptotic of
$\ln \bE \exp\left(-p\int_0^T\|u(t)\|_{-\gamma}^2dt\right)$.
\end{proof}

If $\gamma\leq \dd$, then
   initial condition can affect the small ball rate.
For example,  assume that the initial condition in \eqref{example-PE}
is non-random [$\sigma_k=0$] and $\mu_k=\sqrt{\ln k}$, so that
\eqref{IC-reg} holds.
Using  \eqref{OU-Psi-IC}, \eqref{MainAsymptotic}, and
\eqref{ld-A},
$$
\ln \bE \exp\left(-p \int_0^T\int_0^{\pi} u^2(t,x)dxdt\right)\sim
-p\sum_{k=1}^{\infty} \frac{T+\ln k}{k^2+\sqrt{k^4+2p}},\ p\to \infty.
$$
Similar to derivation of \eqref{S1p-main},
$$
\sum_{k=1}^{\infty} \frac{T+\ln k }{k^2+\sqrt{k^4+2p}}
\sim
\frac{\ln p}{4(2p)^{1/4}} \int_0^{\infty} \frac{dy}{y^2+\sqrt{y^4+1}},
$$
that is,
$$
\ln \bE \exp\left(-p \int_0^T\int_0^{\pi} u^2(t,x)dxdt\right)\sim
-2^{-9/4}C_0\, p^{3/4}\ln p,
$$
with $C_0$ from \eqref{C0}. By Theorem \ref{thm:tauberian1}
with $\alpha=2^{-9/4}C_0$, $\beta=1$, $\tau=3/4$,
$$
\ln \mathbb{P}\left(
 \int_0^T \int_0^{\pi} u^2(t,x) dx dt \leq \varepsilon \right)
  \sim -\frac{27C_0^4}{2^{17}}  \,\varepsilon^{-3}|\ln \varepsilon |^{4},
$$
which is very different from \eqref{0pi-0}: the rate has an additional logarithmic term and the constant does not depend on $T$.

\subsection{Other types of parabolic equations}
Consider a linear operator $\opr{A}$ on
a separable Hilbert space $H$. If $\opr{A}$ is symmetric and
 has a pure point spectrum  $0<\lambda_1\leq \lambda_2\leq
\ldots,$ and the corresponding eigenfunctions
$\varphi_k$ form an orthonormal basis in $H$, then
all the constructions from Section \ref{sec:DSPE} can be
repeated, and an analog of Theorem \ref{th:main} can be
stated and proved
for the  evolution equation
\begin{equation}
\label{eq:space}
u_t(t)+\opr{A}u(t)=\dot{W}(t),\ u(0)=0,
\end{equation}
where $\dot{W}$ is a cylindrical Brownian motion on $H$.

The details depend on the asymptotic behavior of $\lambda_k$ as
$k\to \infty$.
For example, consider the equation
$$
u_t(t,x)=\frac{1}{2}\big(u_{xx}(t,x)-x^2u(t,x)-u(t,x)\big)
+\dot{W}(t,x),\ t>0,\ x\in \bR,\ u(0,x)=0.
$$
Then
$$
H=L_2(\bR), \quad \opr{A}=\frac{1}{2}\left(
-\frac{\partial^2}{\partial x^2}+x^2+1
\right),
$$
and
$$
\lambda_k=k,\ \varphi_k(x)=(-1)^k\frac{1}{\sqrt{2^kk!}}\pi^{-1/4}
e^{x^2/2}\frac{d^k}{dx^k}e^{-x^2};
$$
cf. \cite[Section 1.4]{Obata}.
Since operator $\opr{A}$ has order 2, we
define
$$
H^{\gamma}=\opr{A}^{\gamma/2}L_2(\bR),\
 \|f\|_{\gamma}^2=\sum_{k=1}^{\infty} k^{\gamma}f_k^2.
$$
Recall that the norm in the traditional
Sobolev space on $\bR$ is
$$
|\![f]\!|_{\gamma}^2=\int_{-\infty}^{+\infty}
|\hat{f}(y)|^2(1+y^2)^{\gamma}dy;
$$
 $\hat{f}$ is the Fourier transform of $f$.
 In particular, it follows that
 $$
 \bE|\![W(t)]\!|_{\gamma}^2=\infty
 $$
 for every $\gamma\in \bR$ and every $t>0$
 [roughly speaking, because $\sum_{k} \varphi_k^2(x)=\delta(x)$ and
 each $\varphi_k$ is an eigenfunction of the Fourier transform], and
 consequently the solution of
$v_{t}=v_{xx}+\dot{W}(t,x)$, $x\in \bR$, does not
belong to any traditional Sobolev space on $\bR$. On the other hand, similar to
Propositions \ref{prop:WN} and \ref{prop:HE},
 $$
W\in L_2\Big(\Omega\times (0,T);H^{-\gamma}\Big),\
u\in L_2\Big(\Omega\times (0,T);H^{-\gamma+1}\Big),\ \gamma>1;
$$
$u$ is the solution of \eqref{eq:space}.
The corresponding small ball asymptotics can also be derived.

\begin{proposition}
The following relations hold as $\varepsilon\to 0$:
$$
\ln \mathbb{P}\left(\int_0^T  \|W(t)\|_{-\gamma}^2 dt \leq \varepsilon \right)
  \sim
  \begin{cases}
 \displaystyle
  - \frac{T^2}{8}
\left(\sum_{k=1}^{\infty} k^{-\gamma/2}\right)^2\
\varepsilon^{-1},& {\rm if} \ \gamma>2,\\
 & \\
\displaystyle
-\frac{T^2}{32} \, \varepsilon^{-1}\, |\ln \varepsilon|^2,& {\rm if} \ \gamma=2,\\
& \\
\displaystyle  - \mathfrak{C}_{\gamma}
\varepsilon^{-1/(\gamma-1)},& {\rm if} \ 1<\gamma<2,
\end{cases}
$$
where
\begin{align*}
\mathfrak{C}_{\gamma}=(\gamma-1)^{\gamma}
\gamma^{-2\gamma\varpi}2^{(1-2\gamma)\varpi}
T^{2\varpi}C_{\gamma}^{\gamma\varpi},\
\varpi =\frac{1}{\gamma-1},\
{C}_{\gamma}=\int_0^{+\infty}
\frac{\ln \cosh(y)}{y^{1+(2/\gamma)}}\ dy,
\end{align*}
and
\begin{equation*}
\label{SB-space}
 \ln \mathbb{P}\left(
 \int_0^T  \|u(t)\|_{-\gamma}^2 dt \leq \varepsilon \right)
  \sim
  \begin{cases}
  \displaystyle - \frac{T^2}{8}
\left(\sum_{k=1}^{\infty} k^{-\gamma}\right)^2\
\varepsilon^{-1},& {\rm \ if \ } \gamma>2,\\
& \\
\displaystyle  -\frac{T^2}{128}\,
 \varepsilon^{-1}\,
  |\ln \varepsilon|^2,& {\rm \ if \ } \gamma=2,\\
  & \\
 \displaystyle  -\mathfrak{C}_{\gamma}\, \varepsilon^{-2/\gamma},
 & {\rm \ if \ } 0< \gamma<2,
 \end{cases}
\end{equation*}
where
\begin{align*}
\notag
&\mathfrak{C}_{\gamma}=\frac{\big((1-\tau)T
C_{\gamma}\big)^{1/(1-\tau)}}{2}\, (\varpi)^{\varpi},\ \ \
  \tau=\frac{2}{2+\gamma},\ \ \varpi=
   \frac{\tau}{1-\tau}=\frac{2}{\gamma},\\
&C_{\gamma}= \int_0^{\infty}
\frac{dy}
{y^{1+\gamma}+\sqrt{y^{2+2\gamma}
+y^{\gamma}}}.
\end{align*}
\end{proposition}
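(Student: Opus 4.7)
The plan is to reduce the proposition to Theorems \ref{prop:cBM-SB} and \ref{th:main}, observing that although the Hermite operator lives on $\bR$ rather than on a bounded domain, the proofs of those theorems depend on the spectral data only through the Weyl-type asymptotic $\lambda_k \sim \mathfrak{S}\, k^{2\mm/\dd}$. Here we have the \emph{exact} identity $\lambda_k = k$, corresponding to the formal parameter choice $\mm = 1$, $r = 1$, $\mathfrak{S} = 1$, and effective dimension $\dd = 2$. Substituting these values into the formulas in Theorems \ref{prop:cBM-SB} and \ref{th:main} produces precisely the constants and rates stated in the proposition (in particular, $(\dd/(\dd+2r\mm))^2 = 1/4$ at $\gamma = \dd = 2$ for $u$, giving the $T^2/128$ constant, and $\varpi = \dd/(2\gamma-\dd) = 1/(\gamma-1)$ for $W$, $\varpi = (2r\mm+\dd)/(2\gamma+2r\mm-\dd) = 2/\gamma$ for $u$).

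The steps are as follows. First, I would write down the Laplace transforms via \eqref{CMF} and \eqref{OU-Psi-IC} with $\mu_0 = \sigma_0 = 0$, obtaining
\begin{align*}
\bE\exp\!\Bigl(-p\!\int_0^T\!\|W(t)\|_{-\gamma}^2 dt\Bigr) &= \exp\!\Bigl(-\tfrac12\textstyle\sum_k \ln\cosh\bigl(T\sqrt{2p\,k^{-\gamma}}\bigr)\Bigr),\\
\bE\exp\!\Bigl(-p\!\int_0^T\!\|u(t)\|_{-\gamma}^2 dt\Bigr) &= \exp\bigl(-S_1(p)+S_2(p)-S_3(p)\bigr),
\end{align*}
with $A_k = \sqrt{k^2+2p\,k^{-\gamma}}$, $B_k = k/A_k$, and the $S_j$ as in \eqref{pr-S1-def}--\eqref{pr-S3-def}. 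Second, I would analyze each of the three regimes by the exact arguments of the earlier proofs: dominated convergence when the series $\sum_k k^{-\gamma/2}$ (for $W$) or $\sum_k k^{-\gamma}$ (for $u$) converges; integral comparison against $\int_1^\infty \ln\cosh(\sqrt{p}\,f(x))\,dx$ or $\int_1^\infty f(x;p)\,dx$ in the intermediate regime, followed by the change of variables $y = xR^{-\nu}$ from \eqref{OU-inf-int-M} to extract the $p^\tau$ behavior; and an L'Hospital computation at the critical value $\gamma = 2$ to produce the $\sqrt p\,\ln p$ correction. The lower-order terms $S_2(p)$ and $S_3(p)$ are handled by the same bounds $1 - B_k \le 2p k^{-(r+\gamma)}/(1+2p k^{-(r+\gamma)})$ and $\ln(1+x) \le x$ used in the proof of Theorem \ref{th:main}. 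Finally, I would apply Theorem \ref{thm:tauberian} in the three pure-power cases and Theorem \ref{thm:tauberian1} at $\gamma = 2$ to convert the Laplace asymptotics to small ball asymptotics.

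The only conceptual point requiring care is that the general theorems were stated with the Weyl asymptotic as a hypothesis, derived from a bounded domain; here one must note that the proofs themselves never use the domain beyond this asymptotic, so they transfer verbatim. The main obstacle, if any, is the bookkeeping of the constants in the $\gamma = 2$ case for $u$, where one must correctly combine $\mathfrak{S}^{-\dd/\mm} = 1$, the factor $(\dd/(\dd+2r\mm))^2 = 1/4$, and the $T^2/32$ prefactor to recover $T^2/128$; this is entirely routine arithmetic. No genuinely new analytic input is required, which is why the author presents the result without a separate proof.
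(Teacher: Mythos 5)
Your approach is exactly the paper's: the stated proof simply reduces the $W$ case to the analysis of $\sum_k \ln\cosh\bigl(T\sqrt{2pk^{-\gamma}}\bigr)$ as in Theorem~\ref{prop:cBM-SB} and the $u$ case to the steps of Theorem~\ref{th:main}, which is precisely your reduction via the effective parameters $\mm=1$, $r=1$, $\dd=2$, $\mathfrak{S}=1$. (One minor aside: if you actually carry out the substitution $q=\gamma/\dd=\gamma/2$ in the Theorem~\ref{prop:cBM-SB} formula for $\mathfrak{C}_\gamma$, the factor $q^{-2q\varpi}$ becomes $(\gamma/2)^{-\gamma\varpi}=\gamma^{-\gamma\varpi}2^{\gamma\varpi}$, which combined with $2^{(1-4q)\varpi}=2^{(1-2\gamma)\varpi}$ gives $\gamma^{-\gamma\varpi}2^{(1-\gamma)\varpi}$, not the $\gamma^{-2\gamma\varpi}2^{(1-2\gamma)\varpi}$ printed in the proposition; this looks like a typographical slip in the paper carried over from the $\dd=1$ heat-equation example rather than a gap in your reasoning.)
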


\begin{proof}
The case of $W$ follows from the asymptotic of
$$
 \sum_{k=1}^{\infty}
\ln \cosh\left(T\sqrt{2pk^{-\gamma}}\;\right),\ p\to \infty,
$$
similar to the proof of Theorem  \ref{prop:cBM-SB}.

The case of $u$  follows the same steps as the
 proof of  Theorem \ref{th:main}.
\end{proof}

\section{Summary}
Consider the equation
$$
\dot{u}(t)+\opr{A}^r u=\dot{W}^{\opr{Q}}(t),\ u(0)=0,
$$
with $r>0$,
and assume that the positive-definite operators
 $\opr{A}$ and $\opr{Q}$ commute,  have
 purely point  spectrum
and act in a scale $H^{\gamma}$ of Hilbert spaces,
and
$$
\gamma_0=\inf\{s>0\ |\  \mathfrak{i}\circ\opr{Q}:H^0\to H^{-s}\ \
{\rm \ is \ trace\ class}\} <\infty,
$$
where $\mathfrak{i}$ is the embedding operator.
 If $\gamma>\gamma_0$, then the logarithmic asymptotic of the
 small ball probabilities is similar to finite-dimensional case
 \eqref{SB-OU-fd}:
 $$
 \ln \mathbb{P}\left(\int_0^T \|X(t)\|_{-\gamma}^2dt\leq \varepsilon
 \right)\sim -\frac{\big(T\,\mathrm{trace}(\mathfrak{i}\circ\opr{Q})\big)^2}{8}\,
 \varepsilon^{-1},\
 \varepsilon\to 0,
 $$
for both $X=u$ and $X=W^{\opr{Q}}$.
 Infinite-dimensional effects  appear when
 $\gamma\leq \gamma_0$: the small ball rate now depends on
 $\gamma$ and can be arbitrarily large, whereas the small ball constant
  depends on the operator $\opr{A}$.

%\bibliographystyle{plain}
%\bibliography{SmallBall}

\begin{thebibliography}{10}

\bibitem{RegVar}
N.~H. Bingham, C.~M. Goldie, and J.~L. Teugels.
\newblock {\em Regular variation}, volume~27 of {\em Encyclopedia of
  Mathematics and its Applications}.
\newblock Cambridge University Press, Cambridge, 1987.

\bibitem{DaPr1-2}
G.~Da~Prato and J.~Zabczyk.
\newblock {\em Stochastic equations in infinite dimensions}, volume 152 of {\em
  Encyclopedia of Mathematics and its Applications}.
\newblock Cambridge University Press, Cambridge, second edition, 2014.

\bibitem{fatalov08}
V.~R. Fatalov.
\newblock Exact asymptotics of small deviations for the stationary
  {O}rnstein-{U}hlenbeck process and some {G}aussian diffusions in the
  {$L^p$}-norm, {$2\leq p\leq\infty$}.
\newblock {\em Problemy Peredachi Informatsii}, 44(2):75--95, 2008.

\bibitem{Khosh}
D.~Khoshnevisan.
\newblock {\em Multiparameter processes: {A}n introduction to random fields}.
\newblock Springer, New York, 2002.

\bibitem{Khosh-SPDE}
D.~Khoshnevisan.
\newblock {\em Analysis of stochastic partial differential equations}, volume
  119 of {\em CBMS Regional Conference Series in Mathematics}.
\newblock AMS, Providence, RI, 2014.

\bibitem{Koncz}
K.~Koncz.
\newblock On the parameter estimation of diffusional type processes with
  constant coefficients (elementary {G}aussian processes).
\newblock {\em Anal. Math.}, 13(1):75--91, 1987.

\bibitem{Li-JTP92}
W.~V. Li.
\newblock Comparison results for the lower tail of {G}aussian seminorms.
\newblock {\em J. Theoret. Probab.}, 5(1):1--31, 1992.

\bibitem{lishao01}
W.~V. Li and Q.-M. Shao.
\newblock Gaussian processes: inequalities, small ball probabilities and
  applications.
\newblock In D.~N. Shanbhag and C.~R. Rao, editors, {\em Stochastic processes:
  theory and methods}, volume~19 of {\em Handbook of Statist.}, pages 533--597.
  North-Holland, Amsterdam, 2001.

\bibitem{LSh-II}
R.~S. Liptser and A.~N. Shiryaev.
\newblock {\em Statistics of random processes, {II}: {A}pplications}, volume~6
  of {\em Applications of Mathematics}.
\newblock Springer-Verlag, Berlin, 2001.

\bibitem{SVL-MM}
S.~V. Lototsky and M.~Moers.
\newblock Large-time and small-ball asymptotics for quadratic functionals of
  {G}aussian diffusions.
\newblock {\em Asymptot. Anal.}, 95(3--4):345--374, 2015.

\bibitem{SmallBall-StochWave}
A.~Martin.
\newblock Small ball asymptotics for the stochastic wave equation.
\newblock {\em J. Theoret. Probab.}, 17(3):693--703, 2004.

\bibitem{Obata}
N.~Obata.
\newblock {\em White noise calculus and {F}ock space}, volume 1577 of {\em
  Lecture Notes in Mathematics}.
\newblock Springer, Berlin, 1994.

\bibitem{Roz}
B.~L. Rozovski{\u\i}.
\newblock {\em Stochastic evolution systems}, volume~35 of {\em Mathematics and
  its Applications (Soviet Series)}.
\newblock Kluwer, Dordrecht, 1990.

\bibitem{SafVas}
Yu. Safarov and D.~Vassiliev.
\newblock {\em The asymptotic distribution of eigenvalues of partial
  differential operators}, volume 155 of {\em Translations of Mathematical
  Monographs}.
\newblock American Mathematical Society, Providence, RI, 1997.

\bibitem{Syt1}
G.~N. Sytaya.
\newblock On some asymptotic representation of the {G}aussian measure in a
  {H}ilbert space.
\newblock {\em Theory of Stochastic Processes}, 2:93--104, 1974.

\bibitem{Walsh}
J.~B. Walsh.
\newblock An introduction to stochastic partial differential equations.
\newblock In P.~L. Hennequin, editor, {\em Ecole d'\'et\'e de Probabilit\'es de
  Saint-Flour, XIV, Lecture Notes in Mathematics}, volume 1180, pages 265--439,
  Springer, 1984.

\end{thebibliography}

\end{document}